\newdimen\bibspace
\renewenvironment{thebibliography}[1]{%
 \section*{\refname %or \bibname if you use ``book'' as the documentclass
       \@mkboth{\MakeUppercase\refname}{\MakeUppercase\refname}}%
     \list{\@biblabel{\@arabic\c@enumiv}}%
          {\settowidth\labelwidth{\@biblabel{#1}}%
           \leftmargin\labelwidth
           \advance\leftmargin\labelsep
           \itemsep\bibspace
           \parsep\z@skip     %
           \@openbib@code
           \usecounter{enumiv}%
           \let\p@enumiv\@empty
           \renewcommand\theenumiv{\@arabic\c@enumiv}}%
     \sloppy\clubpenalty4000\widowpenalty4000%
     \sfcode`\.\@m}
    {\def\@noitemerr
      {\@latex@warning{Empty `thebibliography' environment}}%
     \endlist}
\newtheorem{thm}{Theorem}[section]
\newtheorem{lem}[thm]{Lemma}
\newtheorem{prop}[thm]{Proposition}
\newtheorem{rem}[thm]{Remark}
\def\XXint#1#2#3{{\setbox0=\hbox{$#1{#2#3}{\int}$}
  \vcenter{\hbox{$#2#3$}}\kern-.5\wd0}}
\newcommand{\al}{\alpha}                \newcommand{\lda}{\lambda}
                \newcommand{\pa}{\partial}
\newcommand{\va}{\varepsilon}           \newcommand{\ud}{\mathrm{d}}
\newcommand{\be}{\begin{equation}}      \newcommand{\ee}{\end{equation}}
\newcommand{\R}{\mathbb{R}}
\newcommand{\abs}[1]{\lvert#1\rvert}
\begin{document}

\title{\textbf{Singular extinction profiles of solutions to some fast diffusion equations}
\bigskip}

\author{\medskip  Tianling Jin\footnote{T. Jin is partially supported by Hong Kong RGC grants GRF 16302217 and GRF 16306320.}, \  \
Jingang Xiong\footnote{J. Xiong is partially supported by NSFC 11922104 and 11631002.}}

\date{\today}

\maketitle

\begin{abstract}
We study extinction profiles of solutions to fast diffusion equations with some initial data in the Marcinkiewicz space. The extinction profiles will be the singular solutions of their stationary equations. 
\end{abstract}

{\small \textbf{Keywords:} fast diffusion equations, extinction profiles}
%\tableofcontents

\section{Introduction} 

In this paper, we study the non-negative solution of the fast diffusion equation
\begin{equation}\label{eq:fde0}
\begin{split}
u_t&=\Delta u^m\quad\mbox{in }\R^n\times(0,\infty),\\
u(\cdot,0)&=u_0\ge 0,
\end{split}
\end{equation}
where $0<m<1$, $n\ge 3$, and $u_0$ is assumed to be non-negative and locally integrable. It is well known that this problem is well-posed for such $u_0$; see  Herrero-Pierre \cite{HP}. It is also known that when $m>\frac{n-2}{n}$, the solution is positive and smooth at all positive times. However, when $m<\frac{n-2}{n}$, locally integrable initial data may not produce locally bounded solutions, and the solution may be extinct after a finite time, i.e., there exists $T^*>0$ such that $u(x,t)>0$ for all $0<t<T^*$ and $u(x,t)\equiv 0$ for all $t\ge T^*$. An explicit example for such solutions to \eqref{eq:fde0} is
\begin{equation}\label{eq:example}
U(x,t;T^*)=\left(\frac{2m(n-2-nm)}{1-m}\right)^{\frac{1}{1-m}}\left(\frac{T^*-t}{|x|^2}\right)^{\frac{1}{1-m}}
\end{equation}
with arbitrary $T^*>0$.

There have been many interests in analyzing the exact behavior of the solutions near the vanishing time. In \cite{King}, King first formally provided vanishing profiles in the radially symmetric case, and later Galaktionov-Peletier \cite{GP} proved rigorously some of the conjectures raised by King. When $m=\frac{n-2}{n+2}$, del Pino-S\'aez \cite{del} obtained the extinction profile for any fast-decay initial condition without any radial symmetry assumption. This case is of special interest, since it is the so-called Yamabe flow, and the equation is conformally invariant. Blanchet-Bonforte-Dolbeault-Grillo-V\'azquez \cite{BB+}, Bonforte-Grillo-V\'azquez \cite{BGV} and Daskalopoulos-Sesum \cite{DS} showed that if the initial value $u_0$ is bounded from below and above by two Barenblatt solutions (which are self-similar solution to \eqref{eq:fde0} and decay at the rate of $|x|^{-\frac{2}{1-m}}$ at infinity) and approaching to a Barenblatt solution near the infinity in some sense, then the asymptotic behavior of $u$ at the vanishing time is given by a Barenblatt solution. Bonforte-Simonov \cite{BS20} characterized the maximal set of initial data that produces solutions which are pointwisely trapped between two Barenblatt solutions, and uniformly converge in relative error. Further results on the convergence rates can be found in Fila-V\'azquez-Winkler-Yanagida \cite{FVWY}, and convergence  to other  similar solutions can be found in  Daskalopoulos-King-Sesum \cite{DKS}. See also Fila-Winkler \cite{FW} for a result when the initial data are close to a radial stationary (after rescaling the time) solution of  \eqref{eq:fde0}. Asymptotic behavior of singular solution of some fast diffusion equation in the punctured Euclidean space was recently studied in Hui-Park \cite{HuiPark}.

Here, we would like to study the extinction profiles of solutions $u$ to \eqref{eq:fde0} when the initial data are in the Marcinkiewicz space. Recall that the Marcinkiewicz space $\mathcal{M}^q(\R^n)$ is defined as the set of functions $f\in L^1_{loc}(\R^n)$ such that
\begin{equation}\label{eq:Marcinkiewicz}
\int_K |f(x)|\,\ud x\le C |K|^{\frac{q-1}{q}}
\end{equation}
for all subsets $K$ of finite measure, where $|K|$ denotes the Lebesgue measure of $K$. The minimum $C$ in \eqref{eq:Marcinkiewicz} gives a norm in this space, i.e.,
\[
\|f\|_{\mathcal{M}^q(\R^n)}=\sup\{|K|^{-\frac{q-1}{q}}\int_K |f(x)|\,\ud x: K\subset\R^n, |K|<\infty \}.
\]
One can refer to the appendix of B\'enilan-Brezis-Crandall \cite{BBC} on more properties of Marcinkiewicz spaces. Note that $U(x,0;T)$ defined in \eqref{eq:example} belongs to $\mathcal{M}^{q_*}(\R^n)$, where $$q_*=\frac{n(1-m)}{2}.$$ 

When $0<m<\frac{n-2}{n}$, the Marcinkiewicz space has been characterized as a natural space for regularity and extinction of solutions to \eqref{eq:fde0}, by the results in Chapter 5 of the lecture notes \cite{Vaz} of V\'azquez. If $u_0\in\mathcal{M}^q(\R^n)$ for $q>q_*$, then the solution is uniformly bounded. The function $U$ in \eqref{eq:example} shows that if $u_0\in\mathcal{M}^{q_*}(\R^n)$, then the solution may not be bounded. On the other hand, if $u_0\in\mathcal{M}^{q_*}(\R^n)$, then the solution will be extinct after a finite time, and one has an estimate for the extinction time: $T^*\le C(m,n)\|u_0\|_{\mathcal{M}^{q_*}(\R^n)}^{1-m}$. Conversely, if the solution vanishes after a fine time $T$, then it is necessary that 
\[
|B_R|^{-\frac{q*-1}{q*}}\int_{B_R(x_0)} u_0(x)\,\ud x\le C(m,n) T^{\frac{1}{1-m}},
\]
for all $x_0\in\R^n$ and all $R>0$. Therefore, $\mathcal{M}^{q_*}(\R^n)$ is almost the correct space for extinction of non-negative solutions.

We are interested in  studying the extinction profiles of solutions to \eqref{eq:fde0} when the initial data $u_0$ belongs to $\mathcal{M}^{q_*}(\R^n)$. The most typical function in $\mathcal{M}^{q_*}(\R^n)$ is $|x|^{-\frac{2}{1-m}}$. In this paper, we will suppose that the initial datum $u_0$ is of the form
\begin{equation}\label{eq:initialdata}
u_0(x)=|x|^{-\frac{2}{1-m}}f(x), \mbox{ where } f \mbox{ is positive and bounded away from }0\mbox{ and }\infty.
\end{equation}
Such $u_0$ belongs to $\mathcal{M}^{q_*}(\R^n)$. It decays at the same rate $|x|^{-\frac{2}{1-m}}$ as Barenblatt solutions, and it also blows up at the origin at the rate of $|x|^{-\frac{2}{1-m}}$.   In fact, the function $|x|^{-\frac{2}{1-m}}$ is called a singular Barenblatt solution. When the initial data are perturbations of $|x|^{-\frac{2}{1-m}}$ by the order of $|x|^{-l}$ for some $l> \frac{2}{1-m}+2$, then  convergence of the solutions and the rate of convergence in exterior domains have been studied by Fila-V\'azquez-Winkler \cite{FVW} and Fila-Winkler \cite{FW2}.

\subsection{Cylindrical coordinates}
We will write the equation \eqref{eq:fde0} in cylindrical coordinates for the function $u^m$, whose advantage is to transform the equation \eqref{eq:fde0} with an isolated singularity to a corresponding problem without singularities on a product manifold. This is inspired by the work of Gidas-Spruck \cite{GS} on the study of solutions to Yamabe type equations with an isolated singularity.

Let $p=1/m$, $r=|x|$, $\theta=\frac{x}{|x|}$, $\rho =\ln r$, and
\be \label{eq:cylinercoordinate}
w(\rho,\theta,t)=  r^{\frac{2}{p-1}} u^m(r\theta,t).
\ee
Then
\begin{equation}\label{eq:fdew00}
\begin{split}
\frac{\pa }{\pa t} w^{p}&=\frac{\pa^2}{\pa \rho^2}w+\Delta_{\mathbb{S}^{n-1}} w +a \frac{\pa}{\pa \rho } w+ bw \quad \mbox{on }\Sigma\times (0,\infty),\\
w(\rho,\theta,0)&=f(e^\rho\theta)^{\frac 1p}:=w_0,
\end{split}
\end{equation}
where $\Sigma=(-\infty,\infty)\times \mathbb{S}^{n-1}$, $\Delta_{\mathbb{S}^{n-1}}$ is the  Beltrami-Laplace operator on the standard sphere $\mathbb{S}^{n-1}$, and
\begin{align}\label{eq:bconstant}
a= \frac{n-2}{p-1}\left(p-\frac{n+2}{n-2}\right),\quad
b= \frac{2(n-2)}{(p-1)^2}\left(\frac{n}{n-2} -p\right).
\end{align}
We scale the extinction time $T^*$ of $u$ to infinity by letting
\be\label{eq:scaling0}
v(\rho,\theta,t)=\Big (\frac{T^*}{T^*-\tau}\Big)^{\frac{1}{p-1}} w(\rho,\theta,\tau), \quad  t =T^*\ln \Big(\frac{T^*}{T^*-\tau}\Big).
\ee
Then the fast diffusion equation in \eqref{eq:fde0} becomes
\begin{equation}\label{eq:fde1}
\begin{split}
\frac{\pa }{\pa t} v^{p}&=\frac{\pa^2}{\pa \rho^2}v+\Delta_{\mathbb{S}^{n-1}} v +a \frac{\pa}{\pa \rho } v+ bv +\frac{p}{(p-1)T^*} v^p\quad \mbox{on }\Sigma\times (0,\infty),\\
v(\rho,\theta,0)&=f(e^\rho\theta)^{\frac 1p}.
\end{split}
\end{equation}
Therefore, the extinction profiles of the solution $u$ of \eqref{eq:fde0} as $\tau\to T^*$ can be deduced from the asymptotic profiles of the solution $v$ of \eqref{eq:fde1} as $t\to\infty$, which are expected to be the solutions of the stationary equation. 

\subsection{Singular stationary solutions}\label{thm:criticalnoboundary}
The stationary equation of \eqref{eq:fde1} is
\begin{equation}\label{eq:stationary1}
\frac{\pa^2}{\pa \rho^2}v+\Delta_{\mathbb{S}^{n-1}} v +a \frac{\pa}{\pa \rho } v+ bv +\frac{p}{p-1} v^p=0\quad \mbox{on }(-\infty,\infty)\times \mathbb{S}^{n-1}.
\end{equation}
We suppose that $n\ge 3$. The following  results are known.

\begin{itemize}

\item[(i).] If $\frac{n}{n-2}<p<\frac{n+2}{n-2}$, and $v$ is uniformly bounded away from $0$ and $\infty$, then $v$ has to be a constant function, that is, 
\begin{equation}\label{eq:stationaryconstant}
 v\equiv \left(\frac{2\big((n-2)p-n\big)}{p(p-1)}\right)^{\frac{1}{p-1}}.
 \end{equation}
This result was proved by Gidas-Spruck \cite{GS}.

\item[(ii).]  Suppose that $v$ does not depend on the $\rho$-variable, $\frac{n}{n-2}<p<\frac{n+1}{n-3}$ if $n>3$ and $\frac{n}{n-2}<p<\infty$ if $n=3$.  Then $v$ has to the constant function given in \eqref{eq:stationaryconstant}. This was proved by Bidaut-V\'eron-V\'eron \cite{BVeron}.

\item[(iii).] Suppose that $v$ does not depend on the $\rho$-variable, $n>3$  and $p=\frac{n+1}{n-3}$, then 
  \begin{equation}\label{eq:bubble}
 v(\theta)= v_{\theta_0,\lambda}(\theta):=\left(\frac{(n-1)(n-3)}{n+1}\right)^{\frac{n-3}{4}}\cdot \left(\frac{\sqrt{\lda^2-1}}{\lda-\cos(\mbox{dist}(\theta,\theta_0))}\right)^{\frac{n-3}{2}}
  \end{equation}
for some $\theta_0\in\mathbb{S}^{n-1}$ and $\lda>1$, where $\mbox{dist}(\theta,\theta_0)$ is the geodesic distant between $\theta$ and $\theta_0$ on $\mathbb{S}^{n-1}$. This result was proved by Obata \cite{Obata}.

\item[(iv).] If $p=\frac{n+2}{n-2}$ and $\liminf_{\rho\to\infty}\min_{\theta\in\mathbb{S}^{n-1}}v(\rho,\theta)>0$, then $v$ depends only on the $\rho$-variable, and the equation \eqref{eq:stationary1} becomes an ODE. This ODE has a first integral, and all its solutions can be classified by the usual phase plane techniques, that is, either $v$ is a constant function, or $v$ is periodic in the $\rho$-variable with period $\ell>\frac{2\pi}{\sqrt{n-2}}$. These periodic solutions are usually called  the Fowler solutions. For $\ell>\frac{2\pi}{\sqrt{n-2}}$, there is only one Fowler solution with (minimal) period $\ell$. These results can be found in  Caffarelli-Gidas-Spruck \cite{CGS}, and also in Mazzeo-Pacard \cite{MP}.

\end{itemize}

We will use the above results (ii), (iii) and (iv) in our theorems.

\subsection{Main results}

Recall that we will assume the initial data of \eqref{eq:fde0} are of the form \eqref{eq:initialdata}. In the cylindrical coordinate system $\theta=x/|x|\in\mathbb{S}^{n-1}$, $\rho\in\R$, $|x|=e^\rho$, if we write
\[
f_0(\rho,\theta)=f(e^\rho\theta),
\]
then
\begin{equation}\label{eq:initialinradial}
u_0(x)=|x|^{-\frac{2}{1-m}} f_0\left(\log|x|,\frac{x}{|x|}\right)=|x|^{-\frac{2}{1-m}} f_0\left(\rho,\theta\right).
\end{equation}
 We obtain the following asymptotic behavior of solutions to \eqref{eq:fde0} near the vanishing time $T^*$.

\begin{thm}\label{thm:criticalglobal}
Let $n\ge 3$ and $m=\frac{n-2}{n+2}$. Suppose $f_0(\rho,\theta)$ is a smooth positive function on $\R\times\mathbb{S}^{n-1}$, and is periodic in the $\rho$-variable with period $\ell>0$. Let $u$ be the solution of \eqref{eq:fde0} with initial data \eqref{eq:initialinradial}. Let $T^*$ be its extinction time. Then 
\[
\left(\frac{1}{T^*-t}\right)^\frac{1}{1-m}u(x,t) \to \bar v^\frac{1}{m}\left(\log|x|\right)\cdot |x|^{-\frac{2}{1-m}}\quad\mbox{in }C^2_{loc}(\R^n\setminus\{0\})\ \mbox{as }t\to T^*.
\]
where $\bar v$ is a  Fowler solution of \eqref{eq:stationary1}. Moreover, if $\ell\le 2\pi/\sqrt{n-2}$, then $\bar v=\left(\frac{n-2}{\sqrt{n+2}}\right)^{(n-2)/4}$.
\end{thm}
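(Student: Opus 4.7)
The plan is to pass to the cylindrical variable $v$ defined by \eqref{eq:cylinercoordinate} and \eqref{eq:scaling0}, reducing extinction of $u$ as $\tau\to T^*$ to the long-time behavior of $v$ solving \eqref{eq:fde1} on $\Sigma\times[0,\infty)$. In the Yamabe case $m=(n-2)/(n+2)$, i.e., $p=(n+2)/(n-2)$, formula \eqref{eq:bconstant} gives $a=0$ and $b=-(n-2)^2/4$, so \eqref{eq:fde1} is translation-invariant in $\rho$; consequently the $\ell$-periodicity of $w_0=f_0^{1/p}$ is inherited by $v(\cdot,t)$ for all $t\ge 0$.

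The first step is to obtain uniform two-sided bounds $0<c_1\le v(\rho,\theta,t)\le c_2<\infty$ on $\Sigma\times[0,\infty)$. Since $f$ is bounded above and below by positive constants, $u_0$ is sandwiched between two multiples of $|x|^{-2/(1-m)}$, that is, between two singular Barenblatt profiles of the form \eqref{eq:example} with suitably shifted extinction times. The comparison principle for \eqref{eq:fde0}, possibly after a short time shift to upgrade the lower bound past the time at which the comparison profile vanishes, transports this pinching to all $\tau\in(0,T^*)$, and translating through \eqref{eq:cylinercoordinate}--\eqref{eq:scaling0} yields the bounds on $v$. Because $v$ stays away from $0$ and $\infty$, \eqref{eq:fde1} is uniformly parabolic along the solution, so parabolic Schauder theory delivers $C^{2+\alpha}_{\mathrm{loc}}$ estimates on $v$ uniform in $t\ge 1$.

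Exploiting the $\ell$-periodicity, I would next employ the Lyapunov functional on one fundamental domain,
\[
F(v)=\int_0^\ell\int_{\mathbb{S}^{n-1}}\left(\tfrac12|\pa_\rho v|^2+\tfrac12|\nabla_{\mathbb{S}^{n-1}} v|^2-\tfrac{b}{2}v^2-\tfrac{p}{(p^2-1)T^*}v^{p+1}\right)\ud\theta\,\ud\rho,
\]
whose derivative along \eqref{eq:fde1} satisfies $\frac{d}{dt}F(v(\cdot,t))=-\int p\,v^{p-1}v_t^2\,\ud\theta\,\ud\rho\le 0$, with equality only at stationary points. Together with the uniform regularity and Arzel\`a--Ascoli, this shows that the $\omega$-limit set of $v(\cdot,t)$ in $C^2([0,\ell]\times\mathbb{S}^{n-1})$ is nonempty and consists of stationary solutions of \eqref{eq:fde1}. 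After a rescaling by $(T^*)^{-1/(p-1)}$, each such limit becomes a positive solution $\bar v$ of \eqref{eq:stationary1}, still bounded below by a positive constant. By result (iv) in the preceding subsection, every such $\bar v$ depends only on $\rho$ and is either the constant \eqref{eq:stationaryconstant} or a Fowler solution with minimal period $\ell^*>2\pi/\sqrt{n-2}$. Since $\bar v$ is $\ell$-periodic, in the case $\ell\le 2\pi/\sqrt{n-2}$ no Fowler solution is admissible and $\bar v$ must be the constant \eqref{eq:stationaryconstant}, which matches the explicit value stated in the theorem.

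The main obstacle is promoting this subsequential convergence to a single limit. The Fowler solutions of a given minimal period form a one-parameter circle generated by $\rho$-translations, and the argument so far only pins the $\omega$-limit down to a closed connected subset of the constant together with the admissible Fowler orbits. I would close the gap either via a Lojasiewicz--Simon gradient inequality for $F$ at each equilibrium, exploiting the analyticity of the nonlinearity, or via an explicit modulation argument: decompose $v(\cdot,t)=\bar v_{\rho_0(t)}+\psi(\cdot,t)$ with $\psi$ transverse to $\pa_{\rho_0}\bar v_{\rho_0}$, derive an ODE for the phase $\rho_0(t)$ together with a coercive estimate for $\psi$ produced by the dissipation of $F$, and conclude that $\rho_0(t)\to\rho_\infty$ and $\psi(\cdot,t)\to 0$ in $C^2$. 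Translating the resulting unique limit back through \eqref{eq:cylinercoordinate}--\eqref{eq:scaling0} then gives the $C^2_{\mathrm{loc}}$ convergence in the original variables stated in the theorem.
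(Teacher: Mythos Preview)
Your overall architecture---cylindrical coordinates, the Lyapunov functional on one period, compactness of the orbit, and Simon's \L ojasiewicz inequality to upgrade subsequential to full convergence---matches the paper. The substantive gap is in the step where you claim uniform two-sided pointwise bounds on $v$ by comparing $u$ with singular Barenblatt profiles.

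Sandwiching $u_0$ between $U(\cdot,0;T_1)$ and $U(\cdot,0;T_2)$ for some $T_1\le T^*\le T_2$ and invoking comparison only yields
\[
C\Big(\frac{T_1-\tau}{|x|^2}\Big)^{\frac{1}{1-m}}\le u(x,\tau)\le C\Big(\frac{T_2-\tau}{|x|^2}\Big)^{\frac{1}{1-m}},\qquad 0<\tau<T_1.
\]
Unless $T_1=T^*=T_2$ (which forces $f$ constant), these do \emph{not} translate into two-sided bounds on $v$: after rescaling by $(T^*-\tau)^{-1/(1-m)}$ the upper bound blows up like $\big((T_2-\tau)/(T^*-\tau)\big)^{1/(1-m)}\to\infty$ as $\tau\to T^*$, and the lower bound simply disappears at $\tau=T_1<T^*$. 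Your ``short time shift'' does not rescue this: restarting from any $\tau_0<T^*$ reproduces the same mismatch between the comparison extinction times and $T^*$, and nothing in the Barenblatt comparison forces $T_1$ and $T_2$ to collapse onto $T^*$.

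The paper closes this gap by a different mechanism. It first proves a two-sided $L^{p+1}$ bound on $v$ via energy identities (Lemma~\ref{lem:flow-1} and \eqref{eq:vLpbound}). Then, using the conformal invariance available precisely at $p=(n+2)/(n-2)$, it runs a moving-spheres argument on $u$ in $\R^n\setminus\{0\}$ (Proposition~\ref{prop:harnack}) to obtain $|\nabla_x\ln u(x,t)|\le C/|x|$, which after periodicity yields an elliptic Harnack inequality $\sup_{\Sigma_\ell} v(\cdot,t)\le C\inf_{\Sigma_\ell} v(\cdot,t)$ at every time slice (Proposition~\ref{prop:harnack-sub2}). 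Combining this Harnack inequality with the integral bound gives the desired uniform pointwise two-sided bound on $v$. This Harnack step is the essential ingredient at the critical exponent, and it is not replaceable by a Barenblatt comparison.
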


\begin{thm}\label{thm:subcriticalglobal20}
Let $n> 3$ and $m=\frac{n-3}{n+1}$. Suppose $f_0(\rho,\theta)$ is independent of the $\rho$-variable, and is a smooth positive function in the $\theta$-variable. Let $u$ be the solution of \eqref{eq:fde0} with initial data \eqref{eq:initialinradial}. Let $T^*$ be its extinction time. Then there exist $\theta_0\in\mathbb{S}^{n-1}$ and $\lda>1$ such that
\[
\left(\frac{1}{T^*-t}\right)^\frac{1}{1-m}u(x,t) \to  v_{\theta_0,\lambda}^\frac{1}{m}\left(\frac{x}{|x|}\right)\cdot |x|^{-\frac{2}{1-m}}\mbox{ in }C^2_{loc}(\R^n\setminus\{0\})\ \mbox{as }t\to T^*,
\]
where $ v_{\theta_0,\lambda}$ is  given in \eqref{eq:bubble}
\end{thm}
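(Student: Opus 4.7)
The plan is to exploit the scale invariance of the initial data to reduce the evolution on $\R^n$ to a parabolic Yamabe-type flow on the compact sphere $\mathbb{S}^{n-1}$, then use energy monotonicity and compactness to extract subsequential limits identified via Obata's theorem (result (iii) recalled above), and finally promote this to full convergence through a Lojasiewicz-Simon gradient inequality. Since $f_0 = f_0(\theta)$ has no $\rho$-dependence, the initial datum $u_0$ is homogeneous in $x$ of degree $-2/(1-m)$. A direct computation shows that $u_t = \Delta u^m$ is invariant under $u(x,t) \mapsto \sigma^{2/(1-m)} u(\sigma x, t)$ (the resulting time rescaling is trivial), and this symmetry fixes $u_0$. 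Uniqueness in the Herrero-Pierre class \cite{HP} then forces $u(\cdot, t)$ to retain the same homogeneity for every $t > 0$. Equivalently, the functions $w$ and $v$ defined in \eqref{eq:cylinercoordinate} and \eqref{eq:scaling0} are independent of $\rho$, so \eqref{eq:fde1} collapses to
\be
(v^p)_t = \Delta_{\mathbb{S}^{n-1}} v + bv + \frac{p}{(p-1)T^*} v^p \quad\text{on }\mathbb{S}^{n-1}\times(0,\infty),\quad v(\cdot,0) = f_0^{1/p}.
\ee
For $p = (n+1)/(n-3)$ one computes $b = -(n-1)(n-3)/4$, so this is a normalized Yamabe-type flow on the round $\mathbb{S}^{n-1}$, and its positive equilibria are, by Obata's classification, constant multiples of the $v_{\theta_0,\lambda}$ in \eqref{eq:bubble} (the constant comes from the $T^*$ factor in \eqref{eq:scaling0}).

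Because $f_0$ is smooth and positive on a compact manifold, standard quasilinear parabolic regularity supplies uniform $C^{2,\alpha}$ bounds and two-sided positivity $c \le v(\theta,t) \le C$ for all $t \ge 0$. Multiplying the equation by $v_t$ and integrating over $\mathbb{S}^{n-1}$ produces the gradient-flow identity
\be
p\int_{\mathbb{S}^{n-1}} v^{p-1} v_t^2\, d\theta = -\frac{d}{dt}\mathcal{E}(v),\quad \mathcal{E}(v) := \frac{1}{2}\int\bigl(|\nabla v|^2 - bv^2\bigr) d\theta - \frac{p}{(p^2-1)T^*}\int v^{p+1}\, d\theta.
\ee
Since $\mathcal{E}$ is bounded along the flow, $\int_0^\infty\!\!\int v^{p-1} v_t^2\, d\theta\, dt < \infty$, and combined with the $C^{2,\alpha}$ bounds this extracts a sequence $t_k \to \infty$ along which $v(\cdot, t_k) \to v_\infty$ in $C^2(\mathbb{S}^{n-1})$, with $v_\infty$ a positive classical solution of the stationary equation on the sphere. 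Obata's theorem then identifies $v_\infty$ as some $v_{\theta_0,\lambda}$.

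The main obstacle is upgrading subsequential convergence to full convergence, since the set $\mathcal{K}$ of positive equilibria is an $n$-dimensional analytic manifold (the conformal orbit of the round metric), and a priori the parameters $(\theta_0(t_k), \lambda(t_k))$ extracted along different sequences could differ. To rule this drift out I would apply a Lojasiewicz-Simon gradient inequality $|\mathcal{E}(v) - \mathcal{E}(v_\infty)|^{1-\gamma} \le C\,\|\mathcal{E}'(v)\|$ valid in a $C^2$-neighborhood of $\mathcal{K}$ for some $\gamma \in (0, 1/2]$. The crucial algebraic input is a Bianchi-Egnell-type nondegeneracy: the kernel of the linearization of the stationary equation at each $v_{\theta_0,\lambda}$ coincides with the tangent space to $\mathcal{K}$, so $\mathcal{K}$ is the full critical set of $\mathcal{E}$ nearby and Simon's finite-dimensional reduction applies. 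Combining the inequality with the weighted gradient-flow identity above bounds $\int^\infty \|v_t\|_{L^2(\mathbb{S}^{n-1})}\, dt < \infty$ and hence yields $C^2$-convergence of $v(\cdot, t)$ to a single $v_{\theta_0,\lambda}$. Translating back through \eqref{eq:scaling0} and \eqref{eq:cylinercoordinate} then gives the stated extinction profile for $u$. The principal technical difficulty is carrying out this Lojasiewicz analysis in the weighted $v^{p-1}$-metric inherited from the fast-diffusion structure rather than in the standard $L^2$ gradient-flow setting.
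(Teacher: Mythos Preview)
Your overall architecture---reduce to the sphere via homogeneity, identify the equation as a normalized Yamabe flow on $\mathbb{S}^{n-1}$, use energy monotonicity plus Obata for subsequential limits, then Lojasiewicz--Simon for full convergence---matches both the paper's strategy (which simply cites del~Pino--S\'aez \cite{del} for this step) and the paper's own detailed argument for the parallel critical case on $\Sigma_\ell$ in Section~\ref{sec:criticalnoboundary}. So the outline is sound.

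There is, however, a real gap. You write that ``standard quasilinear parabolic regularity supplies uniform $C^{2,\alpha}$ bounds and two-sided positivity $c\le v(\theta,t)\le C$ for all $t\ge 0$''. This is precisely the step that is \emph{not} standard and constitutes the main analytic difficulty. The exponent $p=(n+1)/(n-3)$ is the \emph{critical} Sobolev exponent on the $(n-1)$-sphere, so Moser iteration from the $L^{p+1}$ bound \eqref{eq:vLpbound} does not close (contrast the subcritical argument in Proposition~\ref{prop:harnack-sub-upperbound}, which explicitly uses $p<\frac{n+2}{n-2}$ on an $n$-manifold). A~priori nothing rules out concentration of $v(\cdot,t)$ at a point as $t\to\infty$, and parabolic regularity only yields $C^{2,\alpha}$ control \emph{after} one has uniform $L^\infty$ and lower bounds, not before. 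In the paper's treatment of the analogous critical case (Propositions~\ref{prop:harnack} and~\ref{prop:harnack-sub2}) these bounds are obtained by exploiting the conformal invariance of the equation through a moving-spheres/Kelvin-transform argument yielding an elliptic Harnack inequality on each time slice; del~Pino--S\'aez use a related mechanism. You need to supply an argument of this type here.

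A secondary point: you assert ``$\mathcal{E}$ is bounded along the flow'' without justification. Boundedness from below is not automatic for a critical-exponent functional and in the paper is proved separately (Lemma~\ref{lem:nonnegative}) by a blow-up contradiction against \eqref{eq:vLpbound}. Finally, your invocation of Bianchi--Egnell nondegeneracy is not needed for the stated conclusion: Simon's inequality \eqref{eq:Lojasiewicz} holds for any analytic functional near a critical point without spectral hypotheses, and the paper uses it in exactly that generality. Nondegeneracy would upgrade the exponent to $\theta=1/2$ and give exponential decay, which is stronger than what the theorem claims.
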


\begin{thm}\label{thm:subcriticalglobal}
Let $n\ge 3$ and $\frac{n-3}{n+1}<m<\frac{n-2}{n}$. Suppose $f_0(\rho,\theta)$ is independent of the $\rho$-variable, and is a smooth positive function in the $\theta$-variable. Let $u$ be the solution of \eqref{eq:fde0} with initial data \eqref{eq:initialinradial}. Let $T^*$ be its extinction time. Then 
\[
\left(\frac{1}{T^*-t}\right)^\frac{1}{1-m}u(x,t) \to \left(\frac{2m(n-2-nm)}{1-m}\right)^{\frac{1}{1-m}}\cdot |x|^{-\frac{2}{1-m}}\mbox{ in }C^2_{loc}(\R^n\setminus\{0\})\ \mbox{as }t\to T^*.
\]
\end{thm}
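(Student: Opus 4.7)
The strategy is to pass to the cylindrical coordinates of \S\ref{thm:criticalnoboundary}, exploit the $\rho$-independence of the initial datum to reduce \eqref{eq:fde1} to a reaction--diffusion equation on the sphere, and then combine a Lyapunov identity with the Bidaut-V\'eron--V\'eron classification recalled in item~(ii) of \S\ref{thm:criticalnoboundary}. Because $u_0=|x|^{-2/(1-m)}f(\theta)$ and equation~\eqref{eq:fdew00} is invariant under translations in $\rho$, uniqueness of the Cauchy problem in $\mathcal{M}^{q_*}$ forces both $w$ and $v$ to depend only on $(\theta,t)$, so $v=v(\theta,t)$ satisfies
\begin{equation*}
\partial_t v^p = \Delta_{\mathbb{S}^{n-1}} v + bv + \frac{p}{(p-1)T^*} v^p \qquad \text{on } \mathbb{S}^{n-1}\times(0,\infty),
\end{equation*}
with $v(\theta,0)=f(\theta)^{1/p}$, where $p=1/m\in(n/(n-2),(n+1)/(n-3))$ (with the right endpoint read as $+\infty$ when $n=3$) and $b<0$. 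A direct computation gives $V_*=C_0^m(T^*)^{1/(p-1)}$ as the unique positive constant solving the associated stationary equation, with $C_0=\bigl(\frac{2m(n-2-nm)}{1-m}\bigr)^{1/(1-m)}$, and unwinding~\eqref{eq:cylinercoordinate}--\eqref{eq:scaling0} shows that the theorem is equivalent to $v(\cdot,t)\to V_*$ in $C^2(\mathbb{S}^{n-1})$ as $t\to\infty$.

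Next I would establish uniform a priori bounds $0<c_1\le v(\theta,t)\le c_2$ on $\mathbb{S}^{n-1}\times[0,\infty)$. Since $A\le f\le B$ with $A,B>0$, comparing $u_0$ from above and below with the singular Barenblatt profiles $U(\cdot,0;T_B)$ and $U(\cdot,0;T_A)$ (with $T_A=(A/C_0)^{1-m}$, $T_B=(B/C_0)^{1-m}$) and invoking the parabolic comparison principle for the fast diffusion equation yields $T_A\le T^*\le T_B$ and pointwise two-sided bounds for $u$ on the initial interval $[0,T_A)$. Translated through~\eqref{eq:cylinercoordinate} and~\eqref{eq:scaling0}, these produce two-sided bounds on $v(\theta,t)$ on an initial finite interval of rescaled time. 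Propagating them up to $t=\infty$ is the main technical obstacle, because the natural upper Barenblatt barrier has extinction time strictly greater than $T^*$ and hence degenerates in $v$-coordinates as $\tau\to T^*$. I would handle this by combining the Barenblatt comparison with the local smoothing effect of fast diffusion on the compact manifold $\mathbb{S}^{n-1}$ applied to the reduced equation, in the spirit of the estimates in Chapter~5 of~\cite{Vaz}.

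Finally I would run a Lyapunov argument. The functional
\begin{equation*}
E(v)=\int_{\mathbb{S}^{n-1}}\Bigl(\tfrac{1}{2}|\nabla v|^2 - \tfrac{b}{2} v^2 - \frac{1}{p+1}\cdot\frac{p}{(p-1)T^*} v^{p+1}\Bigr)\,\ud\theta
\end{equation*}
satisfies $\frac{d}{dt}E(v(\cdot,t))=-p\int v^{p-1}v_t^2\,\ud\theta\le 0$, and, by the uniform bounds, is bounded below. Together with parabolic Schauder estimates this yields precompactness of $\{v(\cdot,t)\}$ in $C^2(\mathbb{S}^{n-1})$ and $\int_1^\infty\int_{\mathbb{S}^{n-1}}v^{p-1}v_t^2\,\ud\theta\,dt<\infty$. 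For any $t_n\to\infty$, extract a subsequence so that $v(\cdot,t_n+\cdot)$ converges in $C^2_{loc}(\mathbb{S}^{n-1}\times\mathbb{R})$ to a limit $v_\infty$; the dissipation identity forces $\partial_t v_\infty\equiv 0$, so $v_\infty$ is a positive bounded stationary solution of the reduced equation. After the rescaling $\tilde v=(T^*)^{-1/(p-1)}v_\infty$ which absorbs $T^*$ and reduces the stationary equation to~\eqref{eq:stationary1} on $\rho$-independent functions, item~(ii) of \S\ref{thm:criticalnoboundary} forces $\tilde v$ to be the constant in~\eqref{eq:stationaryconstant}, so $v_\infty\equiv V_*$. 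Since every subsequential limit equals $V_*$, the full orbit converges: $v(\cdot,t)\to V_*$, which via~\eqref{eq:cylinercoordinate}--\eqref{eq:scaling0} gives the stated $C^2_{loc}(\mathbb{R}^n\setminus\{0\})$ convergence of $(T^*-t)^{-1/(1-m)}u(x,t)$ to $C_0|x|^{-2/(1-m)}$.
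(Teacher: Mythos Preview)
Your reduction to the spherical equation and your final Lyapunov/compactness argument are both sound, and your use of the Bidaut-V\'eron--V\'eron uniqueness to promote subsequential convergence to full convergence is actually simpler than the paper's route: the paper proves convergence via Simon's \L ojasiewicz--Simon inequality in a general compact-manifold framework (Theorem~\ref{thm:subcriticalglobal3}) where the stationary solution need not be unique, and this also delivers the rate $t^{-\gamma}$. For this specific theorem the uniqueness shortcut you take is legitimate, though it does not give the rate.

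The genuine gap is in the uniform bounds $0<c_1\le v\le c_2$ on $\mathbb{S}^{n-1}\times[0,\infty)$, which you yourself flag as ``the main technical obstacle'' and then dispatch with a hand-wave toward ``local smoothing''. Barenblatt comparison only controls $v$ on a finite interval of rescaled time, and smoothing effects on $\mathbb{S}^{n-1}$ would require, at minimum, a uniform-in-$t$ integral bound on $v$ that you have not produced; your Lyapunov functional $E$ alone does not give this because its term $-\frac{c}{p+1}\int v^{p+1}$ has the wrong sign. The paper obtains the needed input differently: working with the \emph{unrescaled} $w$, a Berryman--Holland argument (monotonicity of the Sobolev quotient $H(w)$ in \eqref{eq:Sobolevquotient}) yields $\|w(\cdot,t)\|_{L^{p+1}}^{p-1}\sim T^*-t$, which after rescaling gives $\|v(\cdot,t)\|_{L^{p+1}}\sim 1$ for all $t\ge 0$. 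From there the upper bound follows by Moser iteration on the compact manifold (Proposition~\ref{prop:harnack-sub-upperbound}), and the lower bound---which smoothing alone cannot give---comes from an Aronson--B\'enilan type maximum principle: setting $Q=v^{-p}(-\Delta_g v - bv)$, one checks $Q$ satisfies a parabolic inequality forcing $Q\ge \min\{\min_M Q(\cdot,0),0\}$, hence $-\Delta_g v - bv \ge c_1 v^p$, and then the elliptic Harnack inequality combined with the $L^{p+1}$ lower bound yields $\inf_M v \ge 1/C$ (Proposition~\ref{prop:low-bd}). Supplying this middle step would complete your argument.
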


\begin{rem}
Because we assume in the above three theorems that $v_0$ is either independent of the $\rho$-variable, or is periodic in the $\rho$-variable, the $C^2_{loc}$ convergence in these three theorems are sufficient to capture the global convergence. 
\end{rem}

\begin{rem}
The convergence rates in all of Theorems \ref{thm:criticalglobal}, \ref{thm:subcriticalglobal20} and \ref{thm:subcriticalglobal} will be at least  $|\ln(T^*-t)|^{-\gamma}$ for some $\gamma>0$ depending only on $n$, $m$ and $f_0$. See Theorems \ref{thm:criticalglobal2}, \ref{thm:subcriticalglobal22} and \ref{thm:subcriticalglobal2}.

\end{rem}

Our above theorems essentially follow from the observation that if we rewrite the equation \eqref{eq:fde0} under cylindrical coordinates \eqref{eq:cylinercoordinate}, then they will be reduced to obtaining the vanishing behavior of fast diffusion equations \eqref{eq:fdew00} on compact manifolds without boundary. Much of the analysis for our fast diffusion equations on compact manifolds will be similar to those in bounded domains  (more similar to Neumann problems than to Dirichlet problems) in the Euclidean space, where we will use arguments from literatures including Berryman-Holland \cite{BH}, Bonforte-V\'azquez \cite{BV}, del Pino-S\'aez \cite{del} and Simon \cite{Simon}. We also refer to Bonforte-Grillo-V\'azquez \cite{BGV2}, Akagi \cite{Akagi}, Bonforte-Figalli \cite{BF}, Jin-Xiong \cite{JX19,JX20} and  the references therein for more study on the extinction profiles of fast diffusion equations on bounded domains.

This paper is organized as follows. In the next section, we will rewrite the equations in cylindrical coordinates and formulate some equivalent problems. In Section \ref{sec:integralbound}, we obtain the energy estimates. In Section \ref{sec:criticalnoboundary}, we prove Theorems \ref{thm:criticalglobal} and \ref{thm:subcriticalglobal20}. In Section \ref{sec:subcriticalnoboundary}, we prove Theorem \ref{thm:subcriticalglobal}.

\bigskip

\noindent \textbf{Acknowledgement:} Part of this work was completed while the second named author was visiting the Hong Kong University of Science and Technology and Rutgers University, to which he is grateful for providing  the very stimulating research environments and supports. Both authors would like to thank Professor YanYan Li for his interests and constant encouragement. Finally, we would like to thank the anonymous referee for his/her careful reading of the paper, for sharing expertise and references on fast diffusion equations,  and for invaluable suggestions that greatly improved the presentation of the paper.

\section{Reformulations on product manifolds}

We have noted that finding the extinction profiles of solutions to \eqref{eq:fde0} is reduced to describing the asymptotic behavior of solutions to \eqref{eq:fde1}  as $t\to\infty$. We will show that the solutions of \eqref{eq:fde1}  will converge to their stationary solutions under the assumptions in our theorems on the initial data and proper range of the exponent $p$.

Under the assumptions of Theorem \ref{thm:criticalglobal}, we know for  \eqref{eq:fdew00} that $p=\frac{n+2}{n-2}$ and $w_0$ is periodic in $\rho$ with period $\ell$. Therefore, $a=0$, and due to the well-posedness of \eqref{eq:fde0}, $w$ defined by \eqref{eq:cylinercoordinate} satisfies
\begin{equation}\label{eq:fde1critical0}
\begin{split}
\frac{\pa }{\pa t} w^{p}&=\frac{\pa^2}{\pa \rho^2}w+\Delta_{\mathbb{S}^{n-1}} w + bw \quad \mbox{on }\Sigma_\ell\times (0,T^*),\\
w(\rho,\theta,0)&=f_0(\rho,\theta)^{\frac 1p},
\end{split}
\end{equation}
where $f_0(\rho,\theta)=f(e^\rho\theta)$, $b$ is given in \eqref{eq:bconstant}, and $$\Sigma_\ell=(\R/\ell\mathbb{Z})\times \mathbb{S}^{n-1}.$$  
Note that for the choice of $b$ in \eqref{eq:bconstant}, $b<0$ if $p>\frac{n}{n-2}$. 
The rescaled solution $v$ defined by \eqref{eq:scaling0} satisfies
\begin{equation}\label{eq:fde1critical}
\begin{split}
\frac{\pa }{\pa t} v^{p}&=\frac{\pa^2}{\pa \rho^2}v+\Delta_{\mathbb{S}^{n-1}} v + bv +\frac{p}{(p-1)T^*} v^p\quad \mbox{on }\Sigma_\ell\times (0,\infty),\\
v(\rho,\theta,0)&=f_0(\rho,\theta)^{\frac 1p}.
\end{split}
\end{equation}
 We have
 \begin{thm}\label{thm:criticalglobal2}
Let $n\ge 3$ and $p=\frac{n+2}{n-2}$. Suppose $f_0(\rho, \theta)$ is a smooth positive function on $\Sigma_\ell$, and $w$ is a solution of \eqref{eq:fde1critical0} with $b$ given in \eqref{eq:bconstant} and extinction time $T^*$. Let $v$ be the rescaled solution defined in \eqref{eq:scaling0} that satisfies \eqref{eq:fde1critical}. Then $v(\cdot,t)$ converges smoothly  to a stationary solution $\bar v$ as $t\to\infty$. If $\ell\le 2\pi/\sqrt{n-2}$, then $\bar v=\left(\frac{T^*(n-2)^2}{n+2}\right)^{(n-2)/4}$. Moreover, there exist $C>0$ and $\gamma>0$, both of which depend only on $n$ and $f_0$, such that
\[
\|v(\cdot,t)-\bar v\|_{C^2(\Sigma_\ell)}\le Ct^{-\gamma}\quad\mbox{for all }t>1.
\]
\end{thm}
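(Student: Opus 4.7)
The plan is to treat the rescaled equation \eqref{eq:fde1critical} as an analytic gradient flow on the compact manifold $\Sigma_\ell$ and to deduce convergence with a polynomial rate via the Lojasiewicz--Simon scheme. Multiplying the equation by $v_t$ and integrating by parts shows that
\[
\mathcal{F}(v)=\tfrac{1}{2}\int_{\Sigma_\ell}|\nabla v|^2-\tfrac{b}{2}\int_{\Sigma_\ell}v^2-\tfrac{c}{p+1}\int_{\Sigma_\ell}v^{p+1},\qquad c=\tfrac{p}{(p-1)T^*},
\]
is a Lyapunov functional with $\tfrac{d}{dt}\mathcal{F}(v(\cdot,t))=-p\int_{\Sigma_\ell}v^{p-1}v_t^2\le 0$, and its critical points are precisely the solutions of the stationary equation \eqref{eq:stationary1}.

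The first main step is to establish uniform two-sided bounds $0<c_1\le v(\rho,\theta,t)\le c_2<\infty$ on $\Sigma_\ell\times(0,\infty)$. This is delicate because the constant stationary solution $A_*=(-b/c)^{1/(p-1)}$ is ODE-unstable---$bA+cA^p$ is positive for $A>A_*$ and negative for $A<A_*$---so constant barriers alone cannot confine the flow. Instead, the bound relies on the fact that $T^*$ is the precise extinction time of the unrescaled solution $w$, so the rescaling factor $(T^*/(T^*-\tau))^{1/(p-1)}$ in \eqref{eq:scaling0} is sharp; I would combine Aronson--B\'enilan-type monotonicity estimates and Harnack inequalities on the compact manifold $\Sigma_\ell$ (in the spirit of Bonforte--V\'azquez and Berryman--Holland) with the $L^\infty$ smoothing effect of the fast diffusion operator to close the argument. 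Once uniform $L^\infty$ bounds are available, parabolic regularity gives uniform $C^{2,\alpha}$ estimates, hence precompactness of $\{v(\cdot,t)\}_{t\ge 1}$ in $C^2(\Sigma_\ell)$; integrating the energy identity then yields $\int_0^\infty\|v_t(\cdot,t)\|_{L^2}^2\,\ud t<\infty$, so every $C^2$ subsequential limit as $t_k\to\infty$ is a stationary solution $\bar v$ of \eqref{eq:stationary1}.

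To upgrade from subsequential to full convergence with a polynomial rate, I would invoke Simon's Lojasiewicz--Simon inequality: since the nonlinearity $v\mapsto v^p$ is analytic in a neighborhood of a positive stationary solution and $\Sigma_\ell$ is compact, there exists $\alpha\in(0,\tfrac12]$ such that near $\bar v$,
\[
|\mathcal{F}(v)-\mathcal{F}(\bar v)|^{1-\alpha}\le C\,\|\mathcal{F}'(v)\|_{H^{-1}(\Sigma_\ell)}.
\]
Combined with the gradient-flow identity this forces the full trajectory to converge, $\|v(\cdot,t)-\bar v\|_{H^1}\le Ct^{-\gamma}$, and parabolic smoothing applied to $v-\bar v$ then promotes this bound to the stated $C^2$ rate.

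For the special case $\ell\le 2\pi/\sqrt{n-2}$, I would lift $\bar v$ to an $\ell$-periodic positive smooth solution of \eqref{eq:stationary1} on $\R\times\mathbb{S}^{n-1}$; since $\bar v$ is uniformly bounded below, the hypothesis of item (iv) holds, so $\bar v$ depends only on $\rho$ and is either constant or a Fowler solution whose minimal period strictly exceeds $2\pi/\sqrt{n-2}$. The periodicity constraint rules out the Fowler option, forcing $\bar v$ to be the constant determined by $b\bar v+c\bar v^p=0$, namely $\bigl(T^*(n-2)^2/(n+2)\bigr)^{(n-2)/4}$. The main obstacle I foresee is the uniform lower bound on $v$ at the critical Sobolev exponent $p+1=\tfrac{2n}{n-2}$, combined with the verification of the Lojasiewicz--Simon hypotheses when the set of stationary solutions is a priori non-isolated due to the $\rho$-translation and $\mathbb{S}^{n-1}$-rotation symmetries of \eqref{eq:stationary1}; this will require working modulo these finite-dimensional symmetries as in Simon's original framework.
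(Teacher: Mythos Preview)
Your overall architecture---Lyapunov functional, precompactness, Lojasiewicz--Simon for full convergence with polynomial rate, and the Fowler-solution analysis for $\ell\le 2\pi/\sqrt{n-2}$---matches the paper's proof in Section~\ref{sec:criticalnoboundary}. The convergence argument and the identification of the limit are essentially the same.

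The genuine gap is in the uniform two-sided bound, and you correctly flag it as ``the main obstacle.'' Your proposed toolkit (Aronson--B\'enilan monotonicity, Bonforte--V\'azquez smoothing, Berryman--Holland estimates) is exactly what the paper uses in the \emph{subcritical} case (Section~\ref{sec:subcriticalnoboundary}), but it does not close at the critical exponent. The Moser iteration in Proposition~\ref{prop:harnack-sub-upperbound} needs $p<\tfrac{n+2}{n-2}$ to make the exponent $\sigma=1+\tfrac{2(p+1+\beta)}{n(\beta+2)}$ strictly less than $\tfrac{n}{n-2}$; at $p=\tfrac{n+2}{n-2}$ one has $p+1=\tfrac{2n}{n-2}$ exactly at the Sobolev threshold, and the iteration stalls. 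Likewise the Aronson--B\'enilan route to the lower bound (Proposition~\ref{prop:low-bd}) feeds on the upper bound already obtained, so it cannot be run independently here.

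The paper's substitute is a different idea you do not mention: it exploits the \emph{conformal invariance} of the critical equation. Undoing the cylindrical change of variables, $u(x,t)=|x|^{-(n-2)/2}v(\log|x|,x/|x|,t)$ solves $\partial_t u^{(n+2)/(n-2)}=\Delta u+b(t)u^{(n+2)/(n-2)}$ on $\R^n\setminus\{0\}$, and this equation is invariant under Kelvin transforms $u\mapsto u_{x,\lambda}$. A moving-sphere comparison (Proposition~\ref{prop:harnack}, using Lemma~\ref{lem:aux} of Li--Li/Li--Nguyen) then gives the pointwise gradient bound $|\nabla_x\ln u|\le C/|x|$ uniformly in $t$, which translates back to an \emph{elliptic} Harnack inequality $\sup_{\Sigma_\ell}v(\cdot,t)\le C\inf_{\Sigma_\ell}v(\cdot,t)$ on each time slice (Proposition~\ref{prop:harnack-sub2}). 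Combined with the two-sided $L^{p+1}$ bound \eqref{eq:vLpbound} from Berryman--Holland, this immediately yields $1/C\le v\le C$. This conformal/moving-sphere step is the missing ingredient in your proposal.
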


The Fowler solutions in Theorem \ref{thm:criticalglobal2} were presented earlier in Section \ref{thm:criticalnoboundary} (iv).

Under the assumptions in Theorems \ref{thm:subcriticalglobal20} and \ref{thm:subcriticalglobal}, we know for \eqref{eq:fdew00} that $w_0$ is independent of  $\rho$. Therefore, $w$ defined in \eqref{eq:cylinercoordinate} satisfies
\begin{equation}\label{eq:fde1subcritical0}
\begin{split}
\frac{\pa }{\pa t} w^{p}&=\Delta_{\mathbb{S}^{n-1}} w + bw \quad \mbox{on }\mathbb{S}^{n-1}\times (0,T^*),\\
w(\rho,\theta,0)&=f_0(\theta)^{\frac 1p},
\end{split}
\end{equation}
where $f_0(\theta)=f_0(1,\theta)$, $b$ is given in \eqref{eq:bconstant}, and the rescaled solution $v$ defined in \eqref{eq:scaling0} satisfies
\begin{equation}\label{eq:fde1subcritical}
\begin{split}
\frac{\pa }{\pa t} v^{p}&=\Delta_{\mathbb{S}^{n-1}} v + bv +\frac{p}{(p-1)T^*} v^p\quad \mbox{on }\mathbb{S}^{n-1}\times (0,\infty),\\
v(\theta,0)&=f_0(\theta)^{\frac 1p}.
\end{split}
\end{equation}
If $p=\frac{n+1}{n-3}$ with $n>3$, then \eqref{eq:fde1subcritical} is a variant of the Yamabe flow on $\mathbb{S}^{n-1}$ which does not preserve the volume. It has been proved by del Pino-S\'aez \cite{del}  that 

\begin{thm}[del Pino-S\'aez \cite{del}]\label{thm:subcriticalglobal22}
Let $n> 3$ and $p=\frac{n+1}{n-3}$. Suppose $f_0(\theta)$ is a smooth positive function on $\mathbb{S}^{n-1}$, and $w$ is a solution of \eqref{eq:fde1subcritical0} with $b$ given in \eqref{eq:bconstant} and extinction time $T^*$. Let $v$ be the rescaled solution defined in \eqref{eq:scaling0} that satisfies \eqref{eq:fde1subcritical}. Then $v(\cdot,t)$ converges smoothly to $(T^*)^{\frac{1}{p-1}}\bar v$ as $t\to\infty$, where  $\bar v$ is given in \eqref{eq:bubble}.  Moreover, there exist $C>0$ and $\gamma>0$, both of which depend only on $n$ and $f_0$, such that
\[
\|v(\cdot,t)-(T^*)^{\frac{1}{p-1}}\bar v\|_{C^2(\mathbb{S}^{n-1})}\le Ct^{-\gamma}\quad\mbox{for all }t>1.
\]
\end{thm}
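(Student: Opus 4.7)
The plan is to treat equation \eqref{eq:fde1subcritical} as a weighted gradient flow on $\mathbb{S}^{n-1}$ for the energy
\[
\mathcal{E}(v) = \int_{\mathbb{S}^{n-1}} \Bigl( \tfrac{1}{2}|\nabla_\theta v|^2 - \tfrac{b}{2} v^2 - \tfrac{p}{(p^2-1)T^*} v^{p+1}\Bigr) \, d\sigma,
\]
exploiting the fact that when $p=(n+1)/(n-3)$, the exponent $p+1 = 2(n-1)/(n-3)$ is precisely the Sobolev-critical exponent on the $(n-1)$-dimensional sphere. Rewriting \eqref{eq:fde1subcritical} as $p v^{p-1} \partial_t v = -\mathcal{E}'(v)$ produces the Lyapunov identity $\tfrac{d}{dt}\mathcal{E}(v(\cdot,t)) = -p \int v^{p-1} (\partial_t v)^2 \, d\sigma \le 0$.

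Next I would establish uniform two-sided bounds $0<c\le v(\cdot,t)\le C$ on $\mathbb{S}^{n-1}\times(0,\infty)$, together with uniform parabolic $C^{2,\alpha}$ estimates. The upper bound follows from the energy estimates in Section \ref{sec:integralbound} combined with Moser iteration in the Berryman-Holland \cite{BH} spirit; the lower bound follows from Aronson-Caffarelli/Harnack type estimates for this quasilinear parabolic equation, as in del Pino-S\'aez \cite{del}. Once $v$ is pinched between positive constants the equation is uniformly parabolic and Schauder theory yields smooth compactness of the orbit. The energy monotonicity then gives $\int_0^\infty\!\!\int v^{p-1}(\partial_t v)^2\,d\sigma\,dt<\infty$, so along any sequence $t_k\to\infty$ a subsequence of $v(\cdot,t_k)$ converges smoothly to a positive stationary solution $\bar v$ on $\mathbb{S}^{n-1}$. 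Writing $\bar v = (T^*)^{1/(p-1)}V$ reduces the stationary equation to the pure critical Yamabe equation on $\mathbb{S}^{n-1}$, whose positive smooth solutions are exactly the bubble family $v_{\theta_0,\lambda}$ of \eqref{eq:bubble} by Obata \cite{Obata}. This yields subsequential convergence of $v(\cdot,t)$ to $(T^*)^{1/(p-1)}v_{\theta_0,\lambda}$.

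The hard part is upgrading this to full convergence at a polynomial rate, since the stationary set is a non-compact manifold parametrized by $(\theta_0,\lambda)\in\mathbb{S}^{n-1}\times(1,\infty)$. I would invoke the \L ojasiewicz-Simon framework of Simon \cite{Simon}: near any $\bar v$ on the bubble manifold, the linearized operator $\mathcal{E}''(\bar v)$ has a finite-dimensional kernel equal to the tangent space of that manifold, and since $\mathcal{E}$ is real-analytic on $H^1(\mathbb{S}^{n-1})$, one obtains a \L ojasiewicz inequality
\[
\|\mathcal{E}'(v)\|_{H^{-1}} \ge c\, |\mathcal{E}(v)-\mathcal{E}(\bar v)|^{1-\gamma}
\]
for $v$ in a $C^{2,\alpha}$-neighborhood of this manifold. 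Combined with the pointwise-in-time inequality $-\tfrac{d}{dt}\mathcal{E} \ge c\|\partial_t v\|_{L^2}^2$ (which uses the uniform positive lower bound on $v^{p-1}$), the standard Simon-type ODE argument gives $\int_0^\infty \|\partial_t v(\cdot,t)\|_{L^2}\,dt<\infty$, hence full $L^2$ convergence to a single limit in the bubble family. Parabolic regularity then bootstraps this to the claimed rate $\|v(\cdot,t)-(T^*)^{1/(p-1)}\bar v\|_{C^2(\mathbb{S}^{n-1})}\le Ct^{-\gamma}$. The polynomial (rather than exponential) decay reflects the degeneracy of $\mathcal{E}''$ along the manifold of critical points; the main technical obstacle is verifying the \L ojasiewicz inequality in the present weighted setting where the effective metric on configuration space involves the degenerating factor $v^{p-1}$, but the two-sided bounds obtained in the second step neutralize this degeneracy.
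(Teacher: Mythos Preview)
Your overall strategy is correct and matches what the paper does: the paper itself does not give an independent proof of this theorem but cites del~Pino--S\'aez \cite{del} for the convergence and remarks that the decay rate follows by the same \L ojasiewicz--Simon argument as in the proof of Theorem~\ref{thm:criticalglobal2}. Your outline---energy monotonicity, uniform two-sided bounds, parabolic compactness, identification of the $\omega$-limit set via Obata, and Simon's inequality for full convergence with a polynomial rate---is exactly this route.

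There is, however, one point where your sketch is too casual. You write that the uniform upper bound ``follows from the energy estimates in Section~\ref{sec:integralbound} combined with Moser iteration in the Berryman--Holland spirit.'' But here $p=\frac{n+1}{n-3}$ is the \emph{critical} Sobolev exponent on the $(n-1)$-dimensional sphere, and the Moser iteration of Proposition~\ref{prop:harnack-sub-upperbound} in this paper is explicitly subcritical (it requires $p<\frac{n+2}{n-2}$ on an $n$-manifold; the choice of $\sigma$ there fails at the endpoint). At the critical exponent the $L^{p+1}$ bound from \eqref{eq:vLpbound} does not by itself preclude concentration, and one needs a genuine blow-up/concentration-compactness analysis---this is precisely the substance of \cite{del}. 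In the paper's own critical case (Theorem~\ref{thm:criticalglobal2}) the analogous step is handled not by Moser iteration but by the moving-spheres Harnack inequality of Proposition~\ref{prop:harnack}. So your plan is right in structure, but the upper bound step must invoke the del~Pino--S\'aez blow-up argument (or an analogous concentration-compactness mechanism) rather than a straightforward Moser iteration.
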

Although the decay rate $t^{-\gamma}$ in Theorem \ref{thm:subcriticalglobal22} is not stated in \cite{del}, it can be obtained in a similar way to that of Theorem \ref{thm:criticalglobal2}.

For subcritical exponents, we have
\begin{thm}\label{thm:subcriticalglobal2}
Let $n\ge 3$. Let $\frac{n}{n-2}<p<\frac{n+1}{n-3}$ if $n>3$, and $\frac{n}{n-2}<p<\infty$ if $n=3$. Suppose $f_0(\theta)$ is a smooth positive function on $\mathbb{S}^{n-1}$, and $w$ is a solution of \eqref{eq:fde1subcritical0} with $b$ given in \eqref{eq:bconstant} and extinction time $T^*$. Let $v$ be the rescaled solution defined in \eqref{eq:scaling0} that satisfies \eqref{eq:fde1subcritical} with $b$ given in \eqref{eq:bconstant}. Then $v(\cdot,t)$ converges smoothly to the constant $\left(\frac{2T^*(p(n-2)-n)}{p(p-1)}\right)^{1/(p-1)}$ as $t\to\infty$. Moreover, there exist $C>0$ and $\gamma>0$, both of which depend only on $n$, $p$ and $f_0$, such that
\[
\left\|v(\cdot,t)-\left(\frac{2T^*(p(n-2)-n)}{p(p-1)}\right)^{1/(p-1)}\right\|_{C^2(\mathbb{S}^{n-1})}\le Ct^{-\gamma}\quad\mbox{for all }t>1.
\]

\end{thm}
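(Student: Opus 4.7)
\textbf{Proof plan for Theorem \ref{thm:subcriticalglobal2}.} My approach parallels that of Theorem \ref{thm:criticalglobal2} but is simpler, because the Bidaut-V\'eron--V\'eron rigidity (item (ii) of Section \ref{thm:criticalnoboundary}) ensures that in the subcritical range the only positive stationary solution of \eqref{eq:fde1subcritical} on $\mathbb{S}^{n-1}$ is the constant
\[
\bar v=\left(\frac{2T^*(p(n-2)-n)}{p(p-1)}\right)^{1/(p-1)}.
\]
The plan has three parts: uniform two-sided bounds on $v$, identification of the $\omega$-limit set with $\{\bar v\}$ via a monotone Lyapunov functional, and a \L{}ojasiewicz--Simon upgrade to a polynomial rate.

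First I would establish time-independent estimates $0<c\le v(\theta,t)\le C$ on $\mathbb{S}^{n-1}\times[0,\infty)$. The upper bound follows from the smoothing effect for \eqref{eq:fde1subcritical0} and comparison against separable supersolutions of the form $(C(T^*-\tau))^{1/(p-1)}\phi(\theta)$ in the un-rescaled time; the lower bound uses that $f_0>0$ is bounded below on the compact $\mathbb{S}^{n-1}$ together with an Aronson--B\'enilan type monotonicity (compare Bonforte--V\'azquez \cite{BV}) to rule out premature collapse to zero before $T^*$. Under these bounds, \eqref{eq:fde1subcritical} is uniformly parabolic, and standard Schauder estimates give a uniform $C^{2,\alpha}$ bound so that the orbit $\{v(\cdot,t):t\ge 1\}$ is precompact in $C^2(\mathbb{S}^{n-1})$.

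Next I would introduce the Lyapunov functional
\[
\mathcal{E}(v)=\frac{1}{2}\int_{\mathbb{S}^{n-1}}|\nabla_\theta v|^2\,\ud\theta-\frac{b}{2}\int_{\mathbb{S}^{n-1}}v^2\,\ud\theta-\frac{p}{(p+1)(p-1)T^*}\int_{\mathbb{S}^{n-1}}v^{p+1}\,\ud\theta,
\]
so that a direct computation using the equation yields
\[
\frac{\ud}{\ud t}\mathcal{E}(v(\cdot,t))=-p\int_{\mathbb{S}^{n-1}}v^{p-1}|\pa_t v|^2\,\ud\theta\le 0.
\]
The two-sided bounds make $\mathcal{E}(v(\cdot,t))$ bounded below, hence $\int_0^\infty\!\int_{\mathbb{S}^{n-1}} v^{p-1}|\pa_t v|^2\,\ud\theta\,\ud t<\infty$. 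Together with the $C^{2,\alpha}$-compactness, this forces every accumulation point of $\{v(\cdot,t)\}$ as $t\to\infty$ to be a positive smooth solution of the stationary equation \eqref{eq:stationary1} independent of $\rho$. The Bidaut-V\'eron--V\'eron classification then identifies this accumulation point uniquely as $\bar v$, so the $\omega$-limit set is $\{\bar v\}$ and $v(\cdot,t)\to\bar v$ in $C^2(\mathbb{S}^{n-1})$.

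For the polynomial rate, I would apply a \L{}ojasiewicz--Simon inequality to $\mathcal{E}$ at the isolated critical point $\bar v$. Since $\mathcal{E}$ is real-analytic in $v$ on a $C^{2,\alpha}$-neighborhood of the positive constant, Simon's theorem \cite{Simon} yields constants $\theta\in(0,1/2]$, $\delta>0$ and $C>0$ with
\[
|\mathcal{E}(v)-\mathcal{E}(\bar v)|^{1-\theta}\le C\|\mathcal{E}'(v)\|_{H^{-1}(\mathbb{S}^{n-1})}\qquad\text{whenever }\|v-\bar v\|_{C^{2,\alpha}}<\delta.
\]
Combined with the dissipation identity above and the now-standard Simon iteration (as carried out for fast diffusion equations in references already cited in the paper, e.g.\ \cite{BGV2,JX19}), this delivers $\|v(\cdot,t)-\bar v\|_{C^2(\mathbb{S}^{n-1})}\le Ct^{-\gamma}$ for some $\gamma>0$ depending only on $n$, $p$ and $f_0$. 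The step I expect to be most delicate is the a priori uniform lower bound $v\ge c>0$, which must be obtained without using yet that $v(\cdot,t)\to\bar v$; it requires a careful barrier argument that distinguishes simultaneous extinction at time $T^*$ from partial collapse on a strict subset of $\mathbb{S}^{n-1}$.
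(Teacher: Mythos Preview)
Your plan is essentially the paper's own proof: uniform two-sided bounds (the paper obtains the upper bound by Moser iteration rather than a supersolution comparison, and the lower bound by exactly the Aronson--B\'enilan type computation you allude to, namely showing $Q=v^{-p}Lv$ satisfies a differential inequality forcing $Q\ge c_1$, then applying the elliptic Harnack inequality together with the $L^{p+1}$ lower bound), followed by the Lyapunov functional, the Bidaut-V\'eron--V\'eron rigidity, and the \L{}ojasiewicz--Simon argument for the rate. Your concern that the lower bound requires a delicate barrier argument is therefore overstated: the $Q$-inequality plus elliptic Harnack (with coefficients bounded thanks to the already-established upper bound) handles it cleanly, without any barrier construction.
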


Due to the well-posedness of \eqref{eq:fde0}, Theorems \ref{thm:criticalglobal}, \ref{thm:subcriticalglobal20} and \ref{thm:subcriticalglobal} follows from Theorems \ref{thm:criticalglobal2}, \ref{thm:subcriticalglobal22} and \ref{thm:subcriticalglobal2}, respectively.

Note that the differential operator $\frac{\partial^2}{\partial \rho^2}+\Delta_{\mathbb{S}^{n-1}}$ is the Beltrami-Laplace operator of the product manifold $(\R/\ell\mathbb{Z})\times \mathbb{S}^{n-1}$. Therefore, the equations \eqref{eq:fde1critical0} and \eqref{eq:fde1subcritical0} can be written in a unified form:
\begin{equation}\label{eq:fdemanifold}
\begin{split}
\frac{\pa }{\pa t} w^{p}&=\Delta_{g} w + bw\quad \mbox{on }M\times (0,\infty),\\
w(\cdot,0)&=w_0,
\end{split}
\end{equation}
where $(M,g)$ is a compact manifold without boundary, $\Delta_{g} $ is its Beltrami-Laplace operator, and $b$ is a smooth function on $M$ such that the operator $-\Delta_{g}-b$ is coercive, that is, there exists $c_0>0$ such that
\be\label{eq:coercive}
\int_{M}(|\nabla_g u|^2-bu^2)\,\ud vol_g\ge  c_0 \int_{M}u^2\,\ud vol_g\quad\mbox{for all }u\in C^\infty(M).
\ee
Then by the Sobolev inequality, we will have that (see the proof of \eqref{eq:decreasing} for details)
\[
\frac{\ud}{\ud t} \left(\int_M w(\cdot,t)^{p+1}\,\ud vol_g\right)^{\frac{p-1}{p+1}}\le - C
\]
for some $C>0$ depending only on $n,p,M,g$ and the $c_0$ in \eqref{eq:coercive}. Therefore, solutions of \eqref{eq:fdemanifold}  extinct after a fine time $T^*$. Under the change of variables 
\be\label{eq:scaling0m}
v(x,t)=\Big (\frac{T^*}{T^*-\tau}\Big)^{\frac{1}{p-1}} w(x,\tau), \quad  t =T^*\ln \Big(\frac{T^*}{T^*-\tau}\Big),
\ee 
we have
\begin{equation}\label{eq:fdemanifold1}
\begin{split}
\frac{\pa }{\pa t} v^{p}&=\Delta_{g} v + bv+\frac{p}{(p-1)T^*}v^p\quad \mbox{on }M\times (0,\infty),\\
v(\cdot,0)&=w_0.
\end{split}
\end{equation}
Theorem \ref{thm:subcriticalglobal2} can be slightly generalized to
\begin{thm}\label{thm:subcriticalglobal3}
Let $(M,g)$ be an $n$-dimensional  smooth compact manifold without boundary. Let $1<p<\frac{n+2}{n-2}$ if $n\ge 3$, and $1<p<\infty$ if $n=1,2$. Suppose $w_0$ is a smooth positive function on $M$,  and $b$ is a smooth function on $M$ such that \eqref{eq:coercive} holds. Let $w$ be the solution of \eqref{eq:fdemanifold} with initial data $w_0$, $T^*$ be its extinction time, and $v$ be defined by \eqref{eq:scaling0m}. Then $v(\cdot,t)$ converges smoothly to a stationary solution $\bar v$ of \eqref{eq:fdemanifold1} as $t\to\infty$.  Moreover, there exist $C>0$ and $\gamma>0$, both of which depend only on $n$, $p$, $b$, $M$, $g$, $c_0$ and $w_0$, such that
\[
\|v(\cdot,t)-\bar v\|_{C^2(M)}\le Ct^{-\gamma}\quad\mbox{for all }t>1.
\]

\end{thm}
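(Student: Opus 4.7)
The plan is to follow the standard pattern for proving smooth convergence with polynomial rate to stationary solutions of gradient-flow-type parabolic equations: (i) build a Lyapunov functional, (ii) establish uniform $C^{2,\alpha}$ bounds that bound $v$ away from $0$ and $\infty$, (iii) extract a subsequential limit which must be a stationary solution, and (iv) upgrade to full convergence with rate via a \L ojasiewicz--Simon inequality.

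First I would introduce the natural energy
\[
E(v)=\int_M\Bigl(\tfrac12|\nabla_g v|^2-\tfrac{b}{2}v^2-\tfrac{p}{(p-1)(p+1)T^*}v^{p+1}\Bigr)\,\ud vol_g,
\]
whose Euler--Lagrange equation is exactly the stationary version of \eqref{eq:fdemanifold1}. Multiplying \eqref{eq:fdemanifold1} by $\partial_t v$ yields the dissipation identity
\[
\frac{\ud}{\ud t}E(v(\cdot,t))=-p\int_M v^{p-1}(\partial_t v)^2\,\ud vol_g\le 0,
\]
so $E(v(\cdot,t))$ is monotone. Using the coercivity \eqref{eq:coercive} together with the subcritical Sobolev embedding, $E$ is bounded below along the flow and an argument analogous to the derivation of the differential inequality indicated right after \eqref{eq:coercive} yields the $L^{p+1}(M)$ (hence $L^\infty(M)$ via Moser iteration) upper bound on $v$ uniformly in $t$.

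The delicate step is the uniform \emph{positive lower bound} $v(\cdot,t)\ge c_0>0$. Here I would exploit the fact that $T^*$ is exactly the extinction time of $w$, so the rescaling \eqref{eq:scaling0m} is critical and $\int_M v^{p+1}$ remains bounded away from zero (if it tended to $0$, $E$ would drive the flow to a trivial state contradicting the definition of $T^*$, in the spirit of Berryman--Holland \cite{BH} and Bonforte--V\'azquez \cite{BV}). Once the $L^{p+1}$-mass is trapped between two positive constants, the Harnack inequality for the linearized (non-degenerate) parabolic equation satisfied by $v$ on any time-slice where $v$ is already known to be bounded above, combined with Aronson--B\'enilan style $\partial_t v^p\ge -C v^p$ estimates, yields $\inf_M v(\cdot,t)\ge c_0>0$. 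Standard parabolic regularity for the then-uniformly-parabolic equation promotes this to $C^{2,\alpha}(M)$ bounds uniform in $t\ge 1$.

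With uniform $C^{2,\alpha}$ bounds in hand, the dissipation $\int_1^\infty\!\!\int_M v^{p-1}(\partial_t v)^2\,\ud vol_g\,\ud t<\infty$ forces a sequence $t_k\to\infty$ along which $v(\cdot,t_k)$ converges in $C^2(M)$ to a positive stationary solution $\bar v$. To pass from subsequential to full convergence, and to obtain the polynomial rate, I would apply the \L ojasiewicz--Simon gradient inequality of Simon \cite{Simon} to $E$ at $\bar v$: since the nonlinearity $bv+\frac{p}{(p-1)T^*}v^p$ is analytic in $v$ on $(0,\infty)$, there exist a neighborhood $\mathcal U$ of $\bar v$ in $C^{2,\alpha}(M)$, constants $C>0$ and $\theta\in(0,\tfrac12]$, such that
\[
|E(v)-E(\bar v)|^{1-\theta}\le C\,\|E'(v)\|_{L^2(M)}\quad\text{for all }v\in\mathcal U.
\]
Combined with the dissipation identity and the uniform bounds $c_0\le v\le C_0$ (which control the factor $v^{p-1}$), a now-standard ODE argument shows that $v(\cdot,t)$ enters and stays in $\mathcal U$, converges to $\bar v$, and
\[
\|v(\cdot,t)-\bar v\|_{C^2(M)}\le C t^{-\gamma},\qquad \gamma=\tfrac{\theta}{1-2\theta}\text{ (or }\gamma\text{ arbitrary if }\theta=\tfrac12).
\]

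The main obstacle will be step (ii), namely the uniform positive lower bound on $v$: the fast-diffusion equation degenerates exactly where $v$ is small, so one cannot apply linear parabolic theory before ruling out that scenario. This is precisely the point where the sharpness of $T^*$ and the Berryman--Holland/Bonforte--V\'azquez machinery enter, and it is the place where the proof must be adapted carefully from the bounded-domain Neumann setting to the compact-manifold setting with the potential $b$.
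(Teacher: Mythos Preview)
Your overall strategy---energy monotonicity, $L^{p+1}$ bounds \`a la Berryman--Holland, Moser iteration for the upper bound, a positive lower bound, and then Simon's \L ojasiewicz inequality for convergence with rate $\gamma=\theta/(1-2\theta)$---is exactly the paper's approach, and your identification of the lower bound as the crux is accurate.

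The one place where your sketch diverges from the paper is precisely that step. You propose to combine a parabolic Harnack inequality with an Aronson--B\'enilan estimate $\partial_t v^p\ge -Cv^p$. The trouble is that a parabolic Harnack inequality for the equation $\partial_t v^p=\Delta_g v+bv+\alpha v^p$ requires uniform parabolicity, i.e.\ a two-sided bound on $v^{p-1}$, which is circular. The paper instead runs a maximum-principle argument on the quantity $Q:=v^{-p}Lv$ (with $L=-\Delta_g-b$): a direct computation gives
\[
\partial_t Q\ge \tfrac{1}{p}v^{1-p}\Delta_g Q+\tfrac{2}{p}v^{-p}\langle\nabla_g v,\nabla_g Q\rangle_g-\tfrac{p-1}{p}\alpha Q,
\]
so $Q\ge c_1:=\min\{\min_M Q(\cdot,0),0\}$ for all time. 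This yields the \emph{elliptic} inequality $Lv\ge c_1 v^p$, i.e.\ $-\Delta_g v-(b+c_1 v^{p-1})v\ge 0$; since the upper bound on $v$ (already obtained by Moser iteration) makes the coefficient $b+c_1 v^{p-1}$ bounded, the \emph{elliptic} weak Harnack for supersolutions together with the $L^{p+1}$ lower bound gives $\inf_M v(\cdot,t)\ge 1/C$. This reduction from parabolic to elliptic is what breaks the circularity you worried about, and is the one concrete idea you would need to add to complete your outline.
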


The proof of Theorem \ref{thm:criticalglobal2} is given in Section \ref{sec:criticalnoboundary}. The proof of Theorems \ref{thm:subcriticalglobal2} and \ref{thm:subcriticalglobal3} are given in  Section \ref{sec:subcriticalnoboundary}.

Fast diffusion equations on general noncompact manifolds have been studied in  Bonforte-Grillo-V\'azquez \cite{BGV08,BGV}, Bianchi-Setti \cite{BS18}, Grillo-Muratori-Punzo \cite{GMP}, etc.

\section{Some integral bounds}\label{sec:integralbound}

In this section, we first follow the arguments of Berryman-Holland \cite{BH} to obtain some integral bounds for the solutions of \eqref{eq:fdemanifold}. We assume $1<p\le \frac{n+2}{n-2}$ if $n>3$, and $1<p<\infty$ if $n=1,2$. Denote
\[
L=-(\Delta_g+b),
\]
and we assume $b$ is a smooth function on $M$ satisfying \eqref{eq:coercive}. Let $w$ be the solution of \eqref{eq:fdemanifold} with smooth and positive initial data $w_0$, and let $T^*$ be its extinction time. Let
\begin{equation}\label{eq:Sobolevquotient}
H(t)=H(w(\cdot,t))= \frac{\int_{M} w Lw\,\ud vol_{g}}{(\int_{M} w^{p+1}\,\ud vol_{g} )^{2/(p+1)}  },
\end{equation}
and $H_0=H(0)=\frac{\int_{M} w_0 Lw_0\,\ud vol_{g}}{(\int_{M} w_0^{p+1}\,\ud vol_{g} )^{2/(p+1)}  }. $ We have the following integral estimates for $w$.

\begin{lem}\label{lem:flow-1}  
There exists a positive constant $C$ depending only on $n,p,M,g$ and the $c_0$ in \eqref{eq:coercive} such that for all $0\le t\le T^*$, we have 
\begin{align*}
C (T^*-t)\le \left(\int_{M} w(\cdot, t)^{p+1}\,\ud vol_g\right)^{\frac{p-1}{p+1}} &\le  \frac{p+1}{p}(T^*-t) H_0.
\end{align*}
\end{lem}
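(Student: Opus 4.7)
The plan is to derive an ODE governing $\phi(t) := \int_M w(\cdot,t)^{p+1}\,dvol_g$ and then sandwich $H(t)$ between a positive constant and $H_0$, which via integration gives the two-sided bound on $\phi^{(p-1)/(p+1)}$.

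First I would differentiate $\phi$ in time. Since $\partial_t(w^p)=\Delta_g w + bw = -Lw$ and $w^p w_t = \frac{w}{p}\partial_t(w^p)$, integration by parts yields
\[
\phi'(t) = (p+1)\int_M w^p w_t\,dvol_g = -\frac{p+1}{p}\int_M w\,Lw\,dvol_g = -\frac{p+1}{p}H(t)\,\phi(t)^{\frac{2}{p+1}}.
\]
Choosing the exponent $(p-1)/(p+1)$ collapses the power of $\phi$, giving the clean identity
\[
\frac{d}{dt}\phi(t)^{\frac{p-1}{p+1}} = -\frac{p-1}{p}H(t).
\]
Since $w$ extincts at $T^*$ we have $\phi(T^*)=0$, so integrating from $t$ to $T^*$ gives $\phi(t)^{(p-1)/(p+1)} = \frac{p-1}{p}\int_t^{T^*} H(s)\,ds$. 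The two estimates of the lemma are then equivalent to bounds $c_1 \le H(s) \le H_0$ uniform in $s\in[0,T^*)$.

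For the upper bound I would show, following Berryman--Holland, that $H$ is non-increasing along the flow. Using $\int_M w^p w_t = -\frac{1}{p}\int_M wLw$ and the Cauchy--Schwarz inequality
\[
\Big(\int_M w^p w_t\,dvol_g\Big)^2 = \Big(\int_M w^{(p-1)/2}w_t \cdot w^{(p+1)/2}\,dvol_g\Big)^2 \le \int_M w^{p-1}w_t^2\,dvol_g \cdot \int_M w^{p+1}\,dvol_g,
\]
together with the identity $\frac{d}{dt}\int_M wLw = -2p\int_M w^{p-1}w_t^2\,dvol_g$ (from multiplying the PDE by $w_t$), a direct quotient-rule calculation gives $H'(t)\le 0$. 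Thus $H(t)\le H_0$, and integrating from $t$ to $T^*$ yields the upper bound (with the slightly sharper constant $\frac{p-1}{p}$, hence certainly the stated $\frac{p+1}{p}$).

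For the lower bound I would combine coercivity \eqref{eq:coercive} with the Sobolev embedding on the compact manifold $(M,g)$. Since $p\le \frac{n+2}{n-2}$ (so $p+1\le \frac{2n}{n-2}$), the Sobolev inequality gives
\[
\Big(\int_M w^{p+1}\,dvol_g\Big)^{\frac{2}{p+1}} \le C_S \int_M\big(|\nabla_g w|^2 + w^2\big)\,dvol_g,
\]
and coercivity plus boundedness of $b$ control both $\int|\nabla_g w|^2$ and $\int w^2$ by $\int wLw$. Therefore $H(t)\ge c_1>0$ uniformly in $t$, and integrating the ODE yields $\phi(t)^{(p-1)/(p+1)} \ge \frac{p-1}{p}c_1(T^*-t)$, which is the desired lower bound.

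The only delicate step is the monotonicity $H'\le 0$: the Cauchy--Schwarz argument above is clean once the two identities for $\phi'$ and $\frac{d}{dt}\int wLw$ are in hand. The Sobolev step is standard modulo the restriction $p\le (n+2)/(n-2)$ that is already part of the running hypothesis.
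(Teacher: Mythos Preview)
Your argument is correct and follows the same Berryman--Holland strategy as the paper: derive the ODE $\zeta'(t)=-\tfrac{p-1}{p}H(t)$ for $\zeta=\phi^{(p-1)/(p+1)}$, bound $H$ below via coercivity plus Sobolev, and bound $H$ above by $H_0$ via monotonicity. Your Cauchy--Schwarz in terms of $w_t$ is equivalent to the paper's in terms of $Lw$, since $pw^{p-1}w_t=-Lw$.

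The one genuine difference is the upper bound. You integrate $\zeta'=-\tfrac{p-1}{p}H$ directly on $[t,T^*)$ and use $H\le H_0$, yielding the sharper constant $\tfrac{p-1}{p}$. The paper instead integrates the identity for $\phi'$ (not $\zeta'$) to write $\phi(t)=\tfrac{p+1}{p}\int_t^{T^*}\!\int_M wLw$, then invokes the monotonicity of $t\mapsto\int_M wLw$ (i.e.\ \eqref{eq:energingdreasing}) to pull the integrand out at time $t$, and finally uses $H(t)\le H_0$; this produces the weaker constant $\tfrac{p+1}{p}$. Your route is shorter and avoids the separate use of the monotonicity of $\int_M wLw$, at the cost of needing the limit $\zeta(t)\to 0$ as $t\uparrow T^*$ (which follows since $\zeta$ is decreasing and $T^*$ is the extinction time; the paper handles this by integrating to $t'<T^*$ and passing to the limit).
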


\begin{proof} The proofs are identical to those in \cite{BH} in Euclidean spaces. We include them here for completeness. 

Since $M$ is compact and $w_0$ is smooth and positive, we know that $w$ is smooth and positive before its extinction time.
By the equation of $w$, we have 
\begin{align}
\frac{\ud }{\ud t} \int_M wLw\,\ud vol_{g}&= -\frac{2}{p} \int_{M} \frac{(Lw)^2}{w^{p-1}}\,\ud vol_{g},\nonumber\\
 \frac{\ud }{\ud t} \int_{M} w^{p+1}\,\ud vol_{g}&= -\frac{p+1}{p} \int_{M} wLw\,\ud vol_{g}. \label{eq:lem:flow-xaux}
\end{align}
Therefore,  
\begin{align}\label{eq:Hdecreasing}
\frac{\ud }{\ud t}H(t)=\frac{2}{p}H(w(\cdot,t)) \left(\frac{\int_{M} wLw\,\ud vol_{g}}{\int_{M} w^{p+1}\,\ud vol_{g}}-\frac{\int_{M}\frac{(Lw)^2}{u^{p-1}}\,\ud vol_{g}}{\int_{M} wLw\,\ud vol_{g}}\right)\le 0, 
\end{align}
where in the last inequality we used the Cauchy-Schwarz inequality.
\[
\left(\int_{M}  wLw\,\ud vol_{g}\right )^2 \le \int_{M} w^{p+1}\,\ud vol_{g} \int_{M}\frac{(Lw)^2}{w^{p-1}} \,\ud vol_{g}.
\]

Let 
\[
\zeta(t)=\left(\int_{M} w(\cdot,t)^{p+1}\,\ud vol_g\right)^{\frac{p-1}{p+1}}. 
\]
%Integrating both sides of \eqref{eq:diffineq-1} from $s$ to $t$ with $0\le s<t\le T^*$, we have 
%\be \label{eq:diffeq-3}
%\left(\frac{\int u(t)^{p+1}}{\int u(s)^{p+1}}\right)^{2/(p+1)} \ge \frac{\int u(t)Lu(t)}{\int u(s)Lu(s)}. 
%\ee
%By \eqref{eq:diff-2}, 
Then one can verify that
\begin{equation}\label{eq:derivative}
\zeta'(t)=-\frac{p-1}{p} H(t). 
\end{equation}
%Hence, $\zeta''(t)=-\frac{p-1}{p} H'(t)\ge 0$, and thus, $\zeta$ is convex. 
%It follows that 
%\[
%\zeta(t)\le \zeta(T)+\left(\frac{\zeta(s)-\zeta(T)}{T-s}\right)(T-t)
%\]
%for all $s\le t\le T$. Setting $s=0$ and $T=T^*$, we obtain the second inequality in this lemma. 
Since $b$ satisfies \eqref{eq:coercive}, $1<p\le \frac{n+2}{n-2}$ if $n>3$, and $1<p<\infty$ if $n=1,2$, by the Sobolev inequality and  H\"older's inequality, we have 
\[
H(t)\ge C>0
\]
for some positive constant $C$ depending only on $n,p,M,g$ and the $c_0$ in \eqref{eq:coercive}. Therefore, it follows from \eqref{eq:derivative} that 
\begin{equation}\label{eq:decreasing}
\zeta'(t)\le-C
\end{equation}
for some positive constant $C$ depending only on $n,p,M,g$ and the $c_0$ in \eqref{eq:coercive}.  Integrating the above inequality, we have 
\[
\left(\int_{M} w(\cdot,t')^{p+1}\,\ud vol_g\right)^{\frac{p-1}{p+1}}-\left(\int_{M} w(\cdot,t)^{p+1}\,\ud vol_g\right)^{\frac{p-1}{p+1}}\le -C(t'-t)
\]
for $T^*>t'\ge t$. The first inequality of this lemma follows immediately from sending $t'\to T^*$.

Multiplying $-w_t$ to \eqref{eq:fdemanifold} and integrating over $M$, we have 
\begin{align}\label{eq:energingdreasing}
\frac{d}{dt}\int_M wLw\,\ud vol_{g} \le 0.
\end{align}
Then, by integrating \eqref{eq:lem:flow-xaux} from $t$ to $T^*$ and using the above inequality, we have
\begin{align*}
\int_M w^{p+1}(\cdot,t)\,\ud vol_{g}&=\frac{p+1}{p}\int_t^{T^*} \int_M wLw\,\ud vol_{g}\ud t\\
&\le \frac{p+1}{p}(T^*-t) \int_M w(\cdot,t)Lw(\cdot,t)\,\ud vol_{g}\\
&=\frac{p+1}{p}(T^*-t) H(t) \left(\int_{M} w^{p+1}(\cdot,t)\,\ud vol_{g} \right)^{2/(p+1)}\\
&\le   \frac{p+1}{p}(T^*-t) H_0 \left(\int_{M} w^{p+1}(\cdot,t)\,\ud vol_{g} \right)^{2/(p+1)},
\end{align*}
where we used \eqref{eq:energingdreasing} in the first inequality and \eqref{eq:Hdecreasing} in the last inequality. Hence, the second inequality of this lemma follows.
\end{proof}

By setting $t=0$ in Lemma \ref{lem:flow-1}, we have 
\begin{align}\label{eq:estiamteofT}
\frac{p}{(p+1)H_0} \left(\int_{M} w_0^{p+1}\,\ud vol_g\right)^{\frac{p-1}{p+1}} \le T^* \le \frac{1}{C}\left(\int_{M} w_0^{p+1}\,\ud vol_g\right)^{\frac{p-1}{p+1}}.
\end{align}
for some $C>0$ depending only on $n,p,M,g$ and the $c_0$ in \eqref{eq:coercive}. It follows from Lemma \ref{lem:flow-1} that the function $v$ in each of Theorems \ref{thm:criticalglobal2}, \ref{thm:subcriticalglobal22}, \ref{thm:subcriticalglobal2} and \ref{thm:subcriticalglobal3} satisfies
\begin{align}\label{eq:vLpbound} 
\frac 1C_0&\le\|v(\cdot,t)\|_{L^{p+1}} \le C_0\quad\forall\ t\in [0,\infty)
%\frac 1C&\le\int_{\Sigma_\ell} v(\cdot,t) Lv(\cdot,t)\,\ud vol_{g}\le C \label{eq:vH1bound}
\end{align}
for some positive constant $C_0$ which depends only on $n,p,M,g$, $c_0$ in \eqref{eq:coercive}, $\int_{M} w_0^{p+1}\,\ud vol_g$ and $\int_{M} w_0 Lw_0\,\ud vol_{g}$.

\section{Critical cases}\label{sec:criticalnoboundary}
In this section, we will prove Theorem \ref{thm:criticalglobal2}, and consequently, Theorem \ref{thm:criticalglobal}.

\subsection{Uniform bounds}

An intermediate step of proving the convergence in Theorem \ref{thm:criticalglobal} is the following so-called global Harnack inequality (see Bonforte-Vazquez \cite{BVjfa} for a similar case of having a lower and upper bound in terms of Barenblatt profiles, and  see also Bonforte-Simonov \cite{BS20} for a complete characterization of the maximal set of initial data that produces solutions which are pointwisely trapped between two Barenblatt solutions):

\begin{prop}\label{prop:globalHarnack}
Assume all the assumptions in Theorem \ref{thm:criticalglobal}. Then there exists a constant $C>0$ depending only on $n$ and $f_0$ such that
\[
\frac{1}{C} (T^*-t)^{\frac{1}{1-m}} |x|^{-\frac{2}{1-m}}\le u(x,t)\le C (T^*-t)^{\frac{1}{1-m}} |x|^{-\frac{2}{1-m}}
\]
for all $(x,t)\in (\R^n\setminus\{0\})\times(0,T^*)$.
\end{prop}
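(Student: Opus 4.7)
The plan is to translate the proposition into uniform two-sided bounds on the rescaled function $v$. From $w = |x|^{2/(p-1)} u^m$ (cf.\ \eqref{eq:cylinercoordinate}) combined with the time change in \eqref{eq:scaling0}, a direct computation gives
\[
u(x,\tau) = (T^*)^{-\frac{1}{1-m}}(T^*-\tau)^{\frac{1}{1-m}}|x|^{-\frac{2}{1-m}} \, v\bigl(\log|x|, \tfrac{x}{|x|}, t(\tau)\bigr)^{1/m},
\]
so the proposition is equivalent to $1/C \le v(\rho,\theta,t) \le C$ on $\Sigma_\ell \times [0,\infty)$. Because $w_0 = f_0^{1/p}$ is smooth and strictly positive on the compact manifold $\Sigma_\ell$, standard parabolic theory gives such bounds on any finite interval, and the real task is uniform control as $t\to\infty$.

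For the upper bound I would first use the singular Barenblatt solution $U(x,\tau;T) = A(T-\tau)^{1/(1-m)}|x|^{-2/(1-m)}$ from \eqref{eq:example} as a supersolution: since $u_0(x) \le C|x|^{-2/(1-m)}$, comparison with $U(\cdot,\cdot;T_+)$ for a suitable $T_+$ yields the crude bound $u(x,\tau) \le K|x|^{-2/(1-m)}$ for all $\tau<T^*$. Restarting the same comparison from any late time $\tau_0$ reduces the sharp temporal factor $(T^*-\tau)^{1/(1-m)}$ to the uniform bound $\|v(\cdot,t)\|_{L^\infty(\Sigma_\ell)}\le C$, which I would establish by Moser iteration on \eqref{eq:fde1critical}, bootstrapping from the $L^{p+1}$ control of Lemma \ref{lem:flow-1} via the Sobolev inequality on $\Sigma_\ell$. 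The monotone decay of the Berryman--Holland quotient $H(t)$ in \eqref{eq:Hdecreasing} furnishes the extra compactness needed to close the iteration at the critical exponent $p = (n+2)/(n-2)$.

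For the lower bound the mirror barrier with a sub-Barenblatt $U(\cdot,\cdot;T_-)$, $T_- = (c/A)^{1-m}$, only controls $u$ on $[0,T_-]$, and $T_-$ may be strictly less than $T^*$. To extend past $T_-$ I would argue by contradiction: suppose some sequence $(x_k,\tau_k)$ with $\tau_k\to T^*$ satisfies $v(\rho_k,\theta_k,t_k)\to 0$. The upper bound just established, parabolic regularity, and the compactness of $\Sigma_\ell$ allow one to extract a smooth subsequential limit $v_\infty\ge 0$ of time translates $v(\cdot,\cdot+t_k)$ on $\Sigma_\ell\times\mathbb{R}$ solving the same equation. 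The $L^{p+1}$ lower bound in \eqref{eq:vLpbound} forces $v_\infty\not\equiv 0$, and the strong maximum principle for the fast diffusion equation (positive solutions cannot develop interior zeros before extinction) then rules out $v_\infty$ vanishing at any point, a contradiction.

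The hard part will be the critical exponent $p = (n+2)/(n-2)$, which is exactly Sobolev-critical on the $n$-dimensional manifold $\Sigma_\ell$: the $L^{p+1}$ bound of Lemma \ref{lem:flow-1} corresponds to $L^{n(1-m)/2}$ integrability for $v^p$, precisely the threshold at which the standard fast diffusion smoothing effect ceases to be automatic. Extracting a quantitative improvement from the decay of $H(t)$ to close the Moser iteration is the technical core of the upper bound, and the lower bound contradiction hinges on the upper bound being in place first, so that the quasilinear equation is non-degenerate along the extracting sequence.
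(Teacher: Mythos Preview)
Your reduction to uniform two-sided bounds on $v$ over $\Sigma_\ell\times[0,\infty)$ is correct and is exactly how the paper frames the problem. The difficulty, as you anticipate, is the upper bound at the critical exponent---but your proposed mechanism for it does not work, and the paper's argument is entirely different.

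Your plan is Moser iteration on $\Sigma_\ell$ starting from the $L^{p+1}$ bound of Lemma~\ref{lem:flow-1}. At $p=\frac{n+2}{n-2}$ on the $n$-dimensional manifold $\Sigma_\ell$, $p+1=\frac{2n}{n-2}$ is exactly the Sobolev exponent, so one iteration step returns you to the same integrability and the loop never closes. Your suggestion that ``the monotone decay of $H(t)$ furnishes the extra compactness needed'' is not a mechanism: boundedness of $H(t)$ is nothing more than the $H^1$ and $L^{p+1}$ control already in hand, and its monotonicity gives no quantitative smallness of a Sobolev defect that could be fed into the iteration. Indeed, the paper carries out Moser iteration only in the subcritical setting (Proposition~\ref{prop:harnack-sub-upperbound}), where the choice $\sigma=1+\frac{2(p+1+\beta)}{n(\beta+2)}$ satisfies $\sigma<\frac{n}{n-2}$ precisely because $p<\frac{n+2}{n-2}$; at the critical exponent that inequality degenerates and the argument collapses.

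What the paper actually does for Theorem~\ref{thm:criticalglobal} is to exploit the conformal invariance available only at $p=\frac{n+2}{n-2}$. The Kelvin transform $u_{x,\lambda}(y,t)=\bigl(\lambda/|y-x|\bigr)^{n-2}u\bigl(x+\lambda^2(y-x)/|y-x|^2,\,t\bigr)$ solves the same equation as $u$; a moving-spheres comparison (Proposition~\ref{prop:harnack}) then yields the pointwise gradient bound $|\nabla_x\ln u(x,t)|\le C/|x|$ for all $t$. In cylindrical coordinates this is an \emph{elliptic} Harnack inequality on each time slice, $\sup_{\Sigma_\ell}v(\cdot,t)\le C\inf_{\Sigma_\ell}v(\cdot,t)$, which together with the two-sided $L^{p+1}$ bound \eqref{eq:vLpbound} immediately gives both the upper and the lower pointwise bounds on $v$ simultaneously (Proposition~\ref{prop:harnack-sub2}). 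No separate lower-bound argument via compactness and the strong maximum principle is needed.
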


To prove this proposition, we will first prove the uniform lower and upper boundedness of the solution $v$ in  Theorem \ref{thm:criticalglobal2}.

For $p=\frac{n+2}{n-2}$, the equation \eqref{eq:fde1critical} has conformal invariance. We will make use of this invariance to obtain a Harnack inequality, and consequently, uniform estimates.

Denote
\[
u_{x,\lda}(y)= \left(\frac{\lda}{|y-x|}\right)^{n-2} u\left(x+ \frac{\lda^2 (y-x)}{|y-x|^2}\right)
\]
as the Kelvin transform of $u$ with respect to the sphere $\pa B_{\lda}(x)$.

\begin{lem}\label{lem:aux} Let $u\in C^0(B_r), r > 0,$ be a positive function. Assume that
\[
 u_{x,\lda}(y)\le  u(y) \quad \mbox{for any }B_\lda(x) \subset  B_r  \mbox{ and }y\in B_r\setminus B_\lda(x).
 \]
Then $\ln u$ is locally Lipschitz in $B_r$ and
\[
|\nabla \ln u|\le \frac{n-2}{r-|x|} \quad \mbox{for }a.e. ~x\in B_r.
\]
\end{lem}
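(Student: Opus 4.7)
The plan is to apply the moving-sphere inequality twice, with two carefully chosen centers of inversion, to extract a two-sided quantitative estimate on the oscillation of $\log u$ along a line segment. Fix a base point $z \in B_r$, a unit vector $e$, and a small $h>0$. I will place the center of inversion first at $x_1 = z - s e$ and then at $x_2 = z + (s+h) e$, both with the same radius $\lambda = \sqrt{s(s+h)}$, so that in each case $z$ and $z+he$ are inverses of each other with respect to $\partial B_\lambda(x_i)$; here $s>0$ is a parameter to be chosen.

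For the first choice, $|z-x_1|=s<\lambda<s+h=|z+he-x_1|$, and applying the hypothesis to $y=z+he$ gives $u_{x_1,\lambda}(z+he)\le u(z+he)$, which after a direct computation reduces to
\[
\left(\tfrac{s}{s+h}\right)^{(n-2)/2} u(z) \le u(z+he).
\]
The symmetric choice $x_2$ exchanges the roles of $z$ and $z+he$ and yields the matching upper bound, so that
\[
|\log u(z+he) - \log u(z)| \le \tfrac{n-2}{2}\log\!\left(1+\tfrac{h}{s}\right) \le \tfrac{(n-2)h}{2s}.
\]
It remains to check the admissibility condition $B_\lambda(x_i)\subset B_r$, equivalently $|x_i|+\lambda\le r$. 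Since $|x_i|\le |z|+s+h$ by the triangle inequality and $\lambda\to s$ as $h\to 0$, any $s\le (r-|z|)/2$ is admissible for $h$ small, uniformly in the direction $e$. Taking $s=(r-|z|)/2$ yields
\[
|\log u(z+he) - \log u(z)| \le \tfrac{n-2}{r-|z|}\,h + o(h).
\]

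On any compact $K\subset B_r$ we have $\inf_{K}(r-|z|)>0$, so the chord estimate above upgrades at once to a uniform Lipschitz bound for $\log u$ on $K$. Rademacher's theorem then gives differentiability almost everywhere in $B_r$, and letting $h\to 0$ in the displayed inequality produces the pointwise bound $|\partial_e \log u(z)|\le (n-2)/(r-|z|)$ for every unit vector $e$, hence $|\nabla \log u(z)|\le (n-2)/(r-|z|)$ a.e. The one genuinely delicate point is the simultaneous admissibility of $s=(r-|z|)/2$ for both centers $x_1,x_2$ in every direction $e$, as this is exactly what pins down the sharp constant; the rest is a direct computation with the explicit form of the Kelvin transform.
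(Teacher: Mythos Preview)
Your proof is correct and is essentially the standard argument: the paper does not prove this lemma but cites Li--Li (for $C^1$ data) and Li--Nguyen (for $C^0$ data), and your two-center moving-sphere computation followed by Rademacher is precisely the Li--Nguyen approach.

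On the point you flag as delicate: strictly speaking, at $s=(r-|z|)/2$ and $h>0$ one has $|x_i|+\lambda \le |z|+s+h+\sqrt{s(s+h)}$, which may exceed $r$ in the worst direction $e$, so the balls need not sit inside $B_r$. The clean fix is to run the argument for any fixed $s<(r-|z|)/2$ (then both balls are admissible for all $h$ small), obtain $|\partial_e\log u(z)|\le (n-2)/(2s)$ at points of differentiability, and let $s\uparrow (r-|z|)/2$; your $o(h)$ term is effectively encoding this limit.
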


The above lemma was first proved in Lemma A.2 of Li-Li \cite{LL} assuming that $w$ is $C^1$. This condition was weakened to be $C^0$ in Lemma 2 of Li-Nguyen \cite{LN}.

\begin{prop} \label{prop:harnack} Let $n\ge 3$, $u\in C^2((\R^n\setminus \{0\})\times [0,T))$ be a positive  solution of
\begin{align*}\frac{\pa }{\pa t}u(x,t)^{\frac{n+2}{n-2}}&=\Delta u(x,t) +b(t) u^{\frac{n+2}{n-2}} \quad \mbox{in }(\R^n\setminus \{0\})\times (0,T),\\
u(x,0)&=u_0(x)> 0 \quad \mbox{in }\R^n\setminus \{0\},
\end{align*}
where $b(t)\in C([0,T))$. Suppose that for all $t\in [0,T)$,
\be\label{eq:cylinderdata-1}
\inf_{\R^n\setminus \{0\}} |x|^{\frac{n-2}{2}}u(x,t)>0\quad\mbox{and}\quad\sup_{\R^n\setminus \{0\}} |x|^{\frac{n-2}{2}}u(x,t)<\infty.
\ee
Let $A>0$ be such that
\begin{equation}\label{eq:lowerandupperbound}
\inf_{\R^n\setminus \{0\}} |x|^{\frac{n-2}{2}}u_0(x)\ge\frac{1}{A}, \ \sup_{\R^n\setminus \{0\}} |x|^{\frac{n-2}{2}}u_0(x)\le A\ \mbox{and}\ \sup_{\R^n\setminus \{0\}}|x|\cdot |\nabla_x \ln u_0(x)| \le A.
\end{equation}
Then there exists $C>0$ depending only on $n$ and $A$ such that 
\[
|\nabla_x \ln u(x,t)| \le \frac{C}{|x|} \quad \mbox{for all }x\in\R^n\setminus\{0\}\mbox{ and }\ 0<t<T.
\]
\end{prop}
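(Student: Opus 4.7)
The plan is to run the moving sphere method on each time slice, exploiting the conformal invariance of the equation at the critical exponent $p=\frac{n+2}{n-2}$, and then to invoke Lemma \ref{lem:aux} to convert the resulting Kelvin inequality into the desired logarithmic-gradient bound. The key structural fact is that for each $x\ne 0$ and $0<\lda<|x|$, the Kelvin transform $u_{x,\lda}(\cdot,t)$ satisfies the same parabolic equation as $u$ on the domain $\om_{x,\lda}:=\{y\ne 0:|y-x|>\lda\}$; this uses precisely $p=(n+2)/(n-2)$ together with the fact that $b$ depends only on $t$ (so is unaffected by spatial transformations). I would verify this invariance by a direct change-of-variables computation.

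Next, using the initial bounds \eqref{eq:lowerandupperbound} and a standard static moving-sphere argument, I would produce, for each $x\in\R^n\setminus\{0\}$, a threshold $\bar\lda_0(x)\ge c|x|$, with $c=c(n,A)>0$, such that $(u_0)_{x,\lda}(y)\le u_0(y)$ for every $\lda\in(0,\bar\lda_0(x))$ and every $y\in\om_{x,\lda}$. The near field $|y-x|\sim\lda$ is handled by the log-gradient bound $|y||\nabla\ln u_0|\le A$, while the far field exploits the decay $u_0\sim|y|^{-(n-2)/2}$ together with the Kelvin-invariance of $|y|^{-(n-2)/2}$ under inversions at the origin. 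I would then propagate this inequality to all $t\in[0,T)$ by parabolic comparison on $\om_{x,\lda}\times[0,T)$: $u$ and $u_{x,\lda}$ solve the same equation and coincide pointwise on $\partial B_\lda(x)$; as $y\to 0$, $u\sim|y|^{-(n-2)/2}\to\infty$ by \eqref{eq:cylinderdata-1} while $u_{x,\lda}$ stays bounded (since $\lda<|x|$ keeps its Kelvin image away from the origin); and as $|y|\to\infty$, $u$ decays like $|y|^{-(n-2)/2}$ while $u_{x,\lda}$ decays at the faster rate $|y|^{-(n-2)}$. Hence $u-u_{x,\lda}\ge 0$ on the parabolic boundary, and the comparison principle yields $u\ge u_{x,\lda}$ on $\om_{x,\lda}\times[0,T)$.

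To conclude, given $x_0\ne 0$ and $t\in[0,T)$, I would choose $R=R(n,A)|x_0|$ small enough that $B_R(x_0)\subset\R^n\setminus\{0\}$ and that, for every $x\in B_R(x_0)$, $\bar\lda_0(x)\ge R$. The propagated moving-sphere inequality then holds for every pair $(x,\lda)$ with $B_\lda(x)\subset B_R(x_0)$, and Lemma \ref{lem:aux} applied to $u(\cdot,t)$ on $B_R(x_0)$ delivers $|\nabla\ln u(x_0,t)|\le (n-2)/R=C(n,A)/|x_0|$. The main obstacle is the parabolic comparison step on the unbounded, singular domain $\om_{x,\lda}$: the underlying fast-diffusion equation degenerates at $y=0$, so one cannot appeal directly to a classical maximum principle. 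I expect to justify it by truncating $\om_{x,\lda}$ to $\{\va<|y|<1/\va\}\cap\om_{x,\lda}$, applying a comparison on the truncated parabolic cylinder (where the boundary values of $u-u_{x,\lda}$ are strictly nonnegative by the asymptotics above, and in fact tend to $+\infty$ at $|y|=\va$), and then sending $\va\to 0$.
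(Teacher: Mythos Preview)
Your proposal is correct and follows essentially the same approach as the paper: establish the moving-sphere inequality for $u_0$ via a near/far-field split based on \eqref{eq:lowerandupperbound}, propagate it to all $t$ by parabolic comparison on truncated domains $\{\va<|y|<R\}\setminus B_\lda(x)$ (using \eqref{eq:cylinderdata-1} to control the artificial boundaries), and conclude with Lemma~\ref{lem:aux}. The only cosmetic difference is that the paper first uses the scaling $u^\mu(x,t)=\mu^{(n-2)/2}u(\mu x,t)$ to reduce to $|x_0|=1$, whereas you carry the $|x|$-dependence through the choice $R=R(n,A)|x_0|$.
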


\begin{proof} Let $u^\mu(x,t)= \mu^{\frac{n-2}{2}} u(\mu x, t)$ with $\mu>0$. Then $u^\mu$ satisfies all the assumptions of the proposition. Therefore, we only need to prove the proposition for $|x|=1$.

Arbitrarily fix $|x_0|=1$. We are going to show that there exists $r\in (0,1/4)$, depending only on $n,\sigma$ and $A$, but independent of the choice of $x_0$, such that for each $t\in [0,T)$ there holds
\be \label{eq:movingsphere-1}
u_{x,\lda}(y,t)\le  u(y,t) \quad \mbox{for any }B_\lda(x) \subset  B_r(x_0)  \mbox{ and }y\in \R^n\setminus B_\lda(x),\ y\neq 0.
\ee
Then the proposition will follow immediately from Lemma \ref{lem:aux}.

Define
\[
r_0=\min\left\{\frac{n-2}{9A},\frac14\right\}.
\]
Then for every $x\in B_{1/2}(x_0)$, $0<r<r_0$ and $\theta\in\mathbb{S}^{n-1}$, we have $|x+r\theta|\ge 1/4$, and thus, 
\begin{align*}
\frac{\ud }{\ud r}(r^{\frac{n-2}{2}} u_0( x+r\theta))&\ge r^{\frac{n-4}{2}}u_0( x+r\theta)\left(\frac{n-2}{2}-\frac{r|\nabla u_0( x+r\theta)|}{u_0( x+r\theta)}\right)\\
&\ge r^{\frac{n-4}{2}}u_0( x+r\theta)\left(\frac{n-2}{2}-4Ar\right)>0.
\end{align*}
This implies that 
\[
(u_0)_{x,\lda}(y)\le  u_0(y) \quad \mbox{for any } 0<\lda<|y-x|<r_0.
\]
It is elementary to verify that there exists $\bar\lda=\bar\lda(A,r_0)\in(0,1/4)$ such that
\[
|y|\bar\lda^2\le |x-y|^2A^{-\frac{4}{n-2}}\left(\frac{1}{2}-\bar\lda\right)\quad\mbox{for all }|y-x|\ge r_0.
\]
Then for every $0<\lda\le\bar\lda$ and $|y-x|\ge r_0$, we have
\[
|y|\lda^2\le |x-y|^2 A^{-\frac{4}{n-2}} \left|x+\frac{\lda^2(y-x)}{|y-x|^2}\right|,
\]
from which, together the first two conditions in \eqref{eq:lowerandupperbound}, it follows that
\[
(u_0)_{x,\lda}(y)\le  u_0(y) \quad \mbox{if } 0<\lda<\bar\lda\quad\mbox{and}\quad  |y-x|\ge r_0,\ y\neq 0.
\]
Hence, 
\begin{equation}\label{eq:movingsphere}
(u_0)_{x,\lda}(y)\le  u_0(y) \quad \mbox{if } 0<\lda<\bar\lda\quad\mbox{and}\quad  |y-x|\ge \lda,\ y\neq 0.
\end{equation}
We are going to use the maximum principle to show that \eqref{eq:movingsphere} holds for all $u(\cdot,t)$. 

Notice that $u_{x,\lda}$ satisfies the same equation as $u$. Moreover, because of the first two conditions in \eqref{eq:lowerandupperbound}, it is elementary to see that for all $0<\lda<\bar\lda$, we have
\[
u_{x,\lda}(y,t)\le  u(y,t) \quad \mbox{when } |y|=\va\ \mbox{or}\  |y|=R
\] 
if $\va$ is sufficiently small and $R$ is sufficiently large. Now consider the equation of $u$ and $u_{x,\lda}$ in the region $\R^n\setminus(\{0\}\cup B_\lda(x))$ and apply the comparison principle, we have
\[
u_{x,\lda}(y,t)\le  u(y,t) \quad \mbox{if } 0<\lda<\bar\lda\quad\mbox{and}\quad  |y-x|\ge \lda,\ y\neq 0,
\]
from which \eqref{eq:movingsphere-1} follows with $r=\bar\lda$.
\end{proof}

\begin{prop} \label{prop:harnack-sub2} 
Let $v$ be as in Theorem \ref{thm:criticalglobal2}. There exists a constant $C$ depending only on $n,\ell$ and $f_0$ such that 
\[
\frac{1}{C} \le v(x,t)\le C \quad \mbox{for all }x\in\Sigma_\ell,\ t>0. 
\]
\end{prop}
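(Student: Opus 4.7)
The plan is to transfer the problem to Euclidean space, apply the moving-sphere gradient estimate of Proposition~\ref{prop:harnack} to obtain a Harnack inequality, and then combine with the $L^{p+1}$ bound \eqref{eq:vLpbound}. Set
$$\tilde U(x,t) = \Big(\frac{T^*}{T^*-\tau}\Big)^{1/(p-1)} u^m(x,\tau), \qquad \tau(t) = T^*-T^*e^{-t/T^*},$$
which is the Euclidean counterpart of $v$. A direct computation (analogous to the passage from $w$ to $v$ in \eqref{eq:scaling0}--\eqref{eq:fde1}) shows that $\tilde U$ satisfies
$$\partial_t \tilde U^{(n+2)/(n-2)} = \Delta \tilde U + \frac{p}{(p-1)T^*}\tilde U^{(n+2)/(n-2)} \quad \text{on } (\R^n\setminus\{0\})\times(0,\infty),$$
and $v(\rho,\theta,t) = |x|^{(n-2)/2}\tilde U(x,t)$ with $|x|=e^\rho$, $\theta = x/|x|$. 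Bounding $v$ uniformly is thus equivalent to bounding $|x|^{(n-2)/2}\tilde U$ uniformly above and below.

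To apply Proposition~\ref{prop:harnack}, I first verify its hypotheses. The initial datum is $\tilde U(\cdot,0)=|x|^{-(n-2)/2}f_0(\log|x|,x/|x|)^m$. Smoothness, positivity, and $\ell$-periodicity of $f_0$ imply that $|x|^{(n-2)/2}\tilde U(\cdot,0)=f_0^m$ is bounded above and below on $\R^n\setminus\{0\}$ and that $|x|\cdot|\nabla_x\ln\tilde U(\cdot,0)|$ is controlled by the $C^1$ norm of $f_0$ on the compact cylinder $\Sigma_\ell$ (via the chain rule, since $\nabla_x\log|x|$ and $\nabla_x(x/|x|)$ decay like $1/|x|$); this gives condition \eqref{eq:lowerandupperbound}. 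For condition \eqref{eq:cylinderdata-1} at each fixed $t\in[0,\infty)$, I use that \eqref{eq:fde1critical0} is a quasilinear parabolic equation on the compact cylinder $\Sigma_\ell$ with smooth positive initial data, so standard parabolic theory for the FDE on a compact manifold produces a smooth strictly positive solution $w$ on $\Sigma_\ell\times[0,T^*)$; transferring back through $\tau\leftrightarrow t$ gives a (non-uniform) two-sided bound on $v(\cdot,t)$ at every finite $t$, which is all that Proposition~\ref{prop:harnack} needs.

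With the hypotheses in place, Proposition~\ref{prop:harnack} yields $|\nabla_x\ln\tilde U(x,t)|\le C/|x|$ with $C=C(n,f_0)$, uniformly in $t>0$. Integrating this bound along smooth curves gives $\tilde U(x,t)\asymp\tilde U(x',t)$ whenever $|x|\asymp|x'|$; covering the fundamental domain $[0,\ell]\times\mathbb S^{n-1}$ of $\Sigma_\ell$ by finitely many such comparable-size pieces yields
$$\sup_{\Sigma_\ell}v(\cdot,t)\le C''\inf_{\Sigma_\ell}v(\cdot,t)\qquad \text{uniformly in }t>0,$$
with $C''=C''(n,\ell,f_0)$. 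Combining this Harnack inequality with the two-sided $L^{p+1}$ bound \eqref{eq:vLpbound} and the finiteness of $|\Sigma_\ell|$ gives the desired uniform bounds $1/C\le v(\cdot,t)\le C$.

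The main obstacle I anticipate is the verification of \eqref{eq:cylinderdata-1} at each $t$: one needs the (standard but nontrivial) smoothness and positivity preservation for the fast diffusion equation on a compact manifold with smooth strictly positive initial data, so that the pointwise bounds used to feed Proposition~\ref{prop:harnack} are available at every finite time, even though uniformity in $t$ is only obtained \emph{a posteriori}.
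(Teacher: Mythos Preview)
Your proposal is correct and follows essentially the same route as the paper: transform $v$ back to a function $\tilde U$ on $\R^n\setminus\{0\}$ satisfying the conformally invariant equation, invoke Proposition~\ref{prop:harnack} to get the log-gradient bound and hence a Harnack inequality on $\Sigma_\ell$, then combine with \eqref{eq:vLpbound}. You are in fact more careful than the paper about verifying the hypotheses of Proposition~\ref{prop:harnack}, in particular \eqref{eq:cylinderdata-1}, which the paper takes for granted.
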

\begin{proof}
Make the change of variables $r=|x|$, $\theta=\frac{x}{|x|}$, $\rho =\ln r$ and
\[
v(\rho,\theta,t)= r^{\frac{2}{p-1}} u(r\theta,t).
\]
Then $u$ satisfies Proposition \ref{prop:harnack}. Consider $(\rho_1,\theta_1), (\rho_2,\theta_2)\in [0,\ell)\times\mathbb{S}^{n-1}$, and denote $x_1=e^{\rho_1}\theta_1$ and $x_2=e^{\rho_2}\theta_2$. Then
\[
u(x_1,t)=e^{\frac{2\rho_1}{1-p}}v(\rho_1,\theta_1,t),\quad u(x_2,t)=e^{\frac{2\rho_2}{1-p}}v(\rho_2,\theta_2,t).
\]
Since $1\le|x_1|\le e^\ell$, $1\le|x_2|\le e^\ell$, it follows from Proposition \ref{prop:harnack} that 
\[
\left|\ln\frac{u(x_1,t) }{u(x_2,t)}\right|=|\ln u(x_1,t) - \ln u(x_2,t)|\le C,
\]
for some $C>0$ depending only on $n,\ell$ and $f_0$. Then it follows that 
\[
u(x_1,t) \le C u(x_2,t),
\]
and consequently,
\[
v(\rho_1,\theta_1,t)\le C v(\rho_2,\theta_2,t)
\]
for some $C>0$ depending only on $n,\ell$ and $f_0$. Since $(\rho_1,\theta_1)$ and $(\rho_2,\theta_2)$ are arbitrary,
then $v$ satisfies the following Harnack inequality on $\Sigma_\ell$: there exists $C>0$ depending only on $n, \ell$ and  $f_0$ such that
\begin{equation}\label{eq:Harnackcritical}
\sup_{\Sigma_\ell} v(\cdot,t)\le C \inf_{\Sigma_\ell} v(\cdot,t).
\end{equation}
The conclusion of this proposition follows from the above Harnack inequality and the integral estimate \eqref{eq:vLpbound}.
\end{proof}

Our Harnack inequality \eqref{eq:Harnackcritical} is an elliptic type Harnack inequality, that is, it holds on every time slice. In the proof, we used the conformal invariance of the equation for $p=\frac{n+2}{n-2}$ and the assumption of the periodicity (in $\rho$) of the solutions. Harnack type inequalities for solutions of general fast diffusion equations have been proved in Bonforte-Simonov \cite{BS2020Adv} and Bonforte-Dolbeault-Nazaret-Simonov \cite{BDNS2021}.

\begin{proof}[Proof of Proposition \ref{prop:globalHarnack}]
By the changes of variables in \eqref{eq:cylinercoordinate} and \eqref{eq:scaling0},  and the estimates of the extinction time $T^*$ in \eqref{eq:estiamteofT}, the lower and upper bounds of 
$u$ in Proposition \ref{prop:globalHarnack} are equivalent to the lower and upper bounds of $v$ in Proposition \ref{prop:harnack-sub2}.
\end{proof}

\subsection{Convergence}\label{sec:convergencecritical}
The right-hand side of \eqref{eq:fde1critical} is the negative gradient of the following functional
\begin{equation}\label{eq:energyJ}
J(v)=\frac{1}{2}\int_{\Sigma_\ell} |\nabla_g v|^2 \,\ud vol_g-\frac{b}{2} \int_{\Sigma_\ell} v^2\,\ud vol_g-\frac{p}{(p^2-1)T^*}\int_{\Sigma_\ell} v^{p+1}\,\ud vol_g.
\end{equation}
Once we have the uniform lower and upper bound of the solution, the convergence will essentially follow from the arguments in del Pino-S\'aez \cite{del} and Simon \cite{Simon}. 

First of all, we have that, denoting $J(t)=J(v(\cdot,t))$ with $v$ as in Theorem \ref{thm:criticalglobal2},
\begin{equation}\label{eq:Jdecrease}
\frac{d}{d t}J(t)=-\frac{4p}{(p+1)^2}\int_{\Sigma_\ell} \Big((v^{\frac{p+1}{2}})_t\Big)^2\,\ud vol_g\le 0.
\end{equation}
Secondly, we show that
\begin{lem}\label{lem:nonnegative}
$J(t)\ge 0$ for all $t>0$.
\end{lem}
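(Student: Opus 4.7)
The plan is to derive an algebraic identity relating $J(t)$ to $\frac{d}{dt}\int_{\Sigma_\ell} v^{p+1}\,\ud vol_g$, and then combine it with the already established monotonicity $J'(t)\le 0$ from \eqref{eq:Jdecrease} and the uniform $L^\infty$ bound from Proposition \ref{prop:harnack-sub2} to rule out $J$ ever becoming negative, via a blow-up contradiction.

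\textbf{Step 1 (Identity).} Integration by parts and the definition $L=-(\Delta_g+b)$ rewrite the energy as
\[
J(v)=\frac{1}{2}\int_{\Sigma_\ell}vLv\,\ud vol_g - \frac{p}{(p^2-1)T^*}\int_{\Sigma_\ell}v^{p+1}\,\ud vol_g.
\]
Testing equation \eqref{eq:fde1critical} against $v$ itself, and using $v\,\partial_t v^p=\frac{p}{p+1}\partial_t v^{p+1}$, gives
\[
\frac{p}{p+1}\frac{d}{dt}\int_{\Sigma_\ell}v^{p+1}\,\ud vol_g = -\int_{\Sigma_\ell}vLv\,\ud vol_g + \frac{p}{(p-1)T^*}\int_{\Sigma_\ell}v^{p+1}\,\ud vol_g.
\]
Solving for $\int vLv$ and substituting into the formula for $J$ produces (after the elementary simplification $\tfrac{1}{2(p-1)}-\tfrac{1}{p^2-1}=\tfrac{1}{2(p+1)}$) the identity
\[
J(t)=\frac{p}{2(p+1)}\left[\frac{1}{T^*}\int_{\Sigma_\ell}v^{p+1}\,\ud vol_g - \frac{d}{dt}\int_{\Sigma_\ell}v^{p+1}\,\ud vol_g\right].
\]

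\textbf{Step 2 (Contradiction).} Suppose toward a contradiction that $J(t_0)<0$ for some $t_0>0$. By \eqref{eq:Jdecrease}, $J$ is non-increasing, so $J(t)\le J(t_0)<0$ for all $t\ge t_0$. Setting $\delta:=-\frac{2(p+1)}{p}J(t_0)>0$, the identity from Step 1 rearranges to
\[
\frac{d}{dt}\int_{\Sigma_\ell}v^{p+1}\,\ud vol_g = \frac{1}{T^*}\int_{\Sigma_\ell}v^{p+1}\,\ud vol_g - \frac{2(p+1)}{p}J(t)\ge \delta
\]
for every $t\ge t_0$. Integrating yields $\int_{\Sigma_\ell}v^{p+1}(\cdot,t)\,\ud vol_g\to\infty$ as $t\to\infty$, which directly contradicts the uniform upper bound $v\le C$ supplied by Proposition \ref{prop:harnack-sub2}. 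Hence $J(t)\ge 0$ for every $t>0$.

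\textbf{Main obstacle.} There is no real obstacle here: Step 1 is a clean integration-by-parts bookkeeping exercise, and Step 2 is a standard blow-up comparison argument. The only ingredient that is nontrivial and external to this lemma is the uniform upper bound of Proposition \ref{prop:harnack-sub2}, which has already been secured via the conformal moving-sphere / Harnack argument specific to the critical exponent $p=\frac{n+2}{n-2}$.
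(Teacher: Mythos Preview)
Your proof is correct and follows essentially the same route as the paper: both derive the identity $F'(t)=\tfrac{1}{T^*}F(t)-\tfrac{2(p+1)}{p}J(t)$ for $F(t)=\int_{\Sigma_\ell}v^{p+1}$, and then argue by contradiction that if $J(t_0)<0$ then $F$ blows up, violating a uniform bound. The only cosmetic differences are that the paper extracts the exponential estimate $F'\ge F/T^*$ (rather than your linear $F'\ge\delta$) and appeals to the integral bound \eqref{eq:vLpbound} rather than the pointwise bound of Proposition~\ref{prop:harnack-sub2}; the latter is slightly more economical since \eqref{eq:vLpbound} is available earlier and already suffices, but this does not affect the argument.
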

\begin{proof}
Suppose that $J(t_0)< 0$ for some $t_0>0$. Then by \eqref{eq:Jdecrease}, $J(t)< 0$ for all $t\ge t_0$.
Let
\[
F(t)=\int_{\Sigma_\ell} v(\cdot,t)^{p+1}\,\ud vol_g.
\]
Then using the equation of $v$ and integration by parts, we have
\begin{align*}
\frac{p+1}{p}\frac{\ud F}{\ud t}(t)&=-\int_{\Sigma_\ell} |\nabla v|^2 \,\ud vol_g+ \int_{\Sigma_\ell} v^2\,\ud vol_g+\frac{p}{(p-1)T^*}\int_{\Sigma_\ell} v^\frac{2n}{n-2}\,\ud vol_g\\
&=-2J(t)+\frac{p+1}{pT^*}F(t)\\
&\ge \frac{p+1}{pT^*}F(t)\quad\mbox{for all }t\ge t_0.
\end{align*}
Hence
\[
F(t)\ge F(t_0)e^{\frac{t-t_0}{T^*}}.
\]
This contradicts the integral estimate \eqref{eq:vLpbound} if $t$ is sufficiently large.
\end{proof}

\begin{proof}[Proof of Theorem \ref{thm:criticalglobal2}:]
In this case, $b=-\frac{(n-2)^2}{4}$. By Proposition \ref{prop:harnack-sub2} and regularity theory, $v$ is bounded in all $C^k$ norms. Then there exists a sequence of time $\{t_j\}$ going to infinity such that $v(\cdot,t_j)\to \bar v$ as $j\to \infty$. We will show that $\bar v$ is a stationary solution to \eqref{eq:fde1critical} and $v(\cdot,t)\to\bar v$ as $t\to\infty$.

Integrating \eqref{eq:Jdecrease} from $t_j$ to $t_j+\tau$, and making use of the Cauchy-Schwarz inequality, we obtain
\[
\int_{\Sigma_\ell} |v(\cdot,t_j+\tau)^{\frac{p+1}{2}}-v(\cdot,t_j)^{\frac{p+1}{2}}|^2\,\ud vol_g\le \frac{(p+1)^2\tau}{4p} (J(t_j+\tau)-J(t_j)).
\]
Since $J(\cdot)$ is decreasing and nonnegative, $J(t)$ converges as $t\to\infty$. Therefore, for $\tau$ in bounded intervals, $v(\cdot,t_j+\tau)\to \bar v$ in $L^{p+1}$ uniformly in $\tau$ as $j\to\infty$, and hence $v(\cdot,t_j+\tau)\to \bar v$ in $C^{2}$ uniformly in $\tau$. Then we integrate the equation \eqref{eq:fde1} from $t_n$ to $t_n+1$, and obtain for every $\varphi \in C^\infty(\Sigma_\ell)$,
\begin{align*}
&\int_{\Sigma_\ell} (v(\cdot,t_j+1)^{p}-v(\cdot,t_j)^{p})\varphi\,\ud vol_g\\
&\quad = \int_{0}^{1} \int_{\Sigma_\ell} \left(\Delta_g v(\cdot,t_j+\tau)+bv(\cdot,t_j+\tau)+\frac{p}{(p-1)T^*}v(\cdot,t_j+\tau)^\frac{n+2}{n-2}\right)\varphi\,\ud vol_g\,\ud \tau.
\end{align*}
Sending $j\to\infty$, we have
\[
\int_{\Sigma_\ell} \left(\Delta_g \bar v+b\bar v+\frac{p}{(p-1)T^*}\bar v^\frac{n+2}{n-2}\right)\varphi\,\ud vol_g=0.
\]
Therefore, $\bar v$ is a stationary solution of \eqref{eq:fde1}.

Next, we show that $v(\cdot,t)\to \bar v$ in $C^2(\Sigma_\ell)$ as $t\to\infty$ (not just along a subsequence). This is essentially a consequence of the uniqueness result of Simon \cite{Simon} for negative gradient flows, and we provide its proof  as follows. Indeed, if we denote
\[
\nabla J (v)= \Delta_g  v+bv+\frac{n+2}{4T^*} v^\frac{n+2}{n-2},
\]
then it follows from Theorem 3 in \cite{Simon} that there exist $\theta\in (0,1/2)$ and $r_0>0$ such that for every $v\in C^{2,\alpha}(\Sigma_\ell)$ with $\|v-\bar v\|_{C^{2,\al}(\Sigma_\ell)}<r_0$, there holds
\begin{equation}\label{eq:Lojasiewicz}
\|\nabla J (v) \|_{L^2(\Sigma_\ell)}\geq \abs{J(v)-J(\bar v)}^{1-\theta}.
\end{equation}
This is an infinite dimensional generalization of the Lojasiewicz inequality. Also, from the equality in \eqref{eq:Jdecrease} and Cauchy-Schwarz inequality, there $c_0>0$ such that
\begin{equation}\label{eq:auxLojasiewicz}
-\frac{\ud}{\ud t} J(v(\cdot,t))\geq c_0 \|v_t(\cdot,t)\|_{L^2(\Sigma_\ell)}\|\nabla J(v(\cdot,t))\|_{L^2(\Sigma_\ell)}.
\end{equation}

For every $\va>0$ (suppose $\va<r_0$), we are going to find $\bar t=\bar t(\va)$ such that $\|v(\cdot,t)-\bar v\|_{C^{2,\al}(\Sigma_\ell)}<\va$ for all $t>\bar t$. From Proposition \ref{prop:harnack-sub2}, we know that $v$ satisfies a uniformly parabolic equation. Fix $\tau_0>0$. Since $v-\bar v$ satisfies
\[
pv^{p-1}(v-\bar v)_t=\Delta_{g} (v-\bar v) + b(v-\bar v) +\frac{p}{(p-1)T^*} (v^p-\bar v^p),
\]
that is,
\[
p(v-\bar v)_t=v^{1-p}\Delta_{g} (v-\bar v) + bv^{1-p}(v-\bar v) +\frac{pv^{1-p}}{(p-1)T^*} \frac{v^p-\bar v^p}{v-\bar v}(v-\bar v),
\]
by multiplying $v-\bar v$ on both sides and integrating by parts, and using Proposition \ref{prop:harnack-sub2}, we have 
\begin{align*}
&\frac{p}{2}\frac{\ud}{\ud t}\int_{\Sigma_\ell} (v-\bar v)^2\,\ud vol_g\\
&\le \int_{\Sigma_\ell} v^{1-p}(v-\bar v)  \Delta_{g} (v-\bar v)\, \ud vol_g+ C \int_{\Sigma_\ell} (v-\bar v)^2\,\ud vol_g\\
&\le \int_{\Sigma_\ell} v^{1-p} \left(\frac{1}{2}\Delta_{g} [(v-\bar v)^2] -  |\nabla_g(v-\bar v)|^2\right)\ud vol_g+ C \int_{\Sigma_\ell} (v-\bar v)^2\,\ud vol_g\\
&\le \int_{\Sigma_\ell} \frac{1}{2}  (v-\bar v)^2 \Delta_{g} (v^{1-p})\,\ud vol_g+ C \int_{\Sigma_\ell} (v-\bar v)^2\,\ud vol_g\\
&\le C \int_{\Sigma_\ell} (v-\bar v)^2\ud vol_g.
\end{align*}
Hence, by Gr\"onwall's inequality, we know that there exists $\delta_1(\va)>0$ such that if $\|v(\cdot,s)-\bar v\|_{L^2(\Sigma_\ell)}<\delta_1(\va)$, then 
\[
\|v(\cdot,t)-\bar v\|_{L^2(\Sigma_\ell)}<\va\quad\mbox{ for }t\in [s,s+\tau_0].
\]
%This is because the L^2 norm is at most exponential growing: consider the equation of v- \bar v and then multiply v-bar v on both side, and then integrate in time to see the exp growth.
Also, by the regularity estimate, there exists $\delta_2(\va)>0$ such that if $\|v(\cdot,s)-\bar v\|_{L^2(\Sigma_\ell)}<\delta_2(\va)$, then 
\[
\|v(\cdot,s+\tau_0)-\bar v\|_{C^{2,\alpha}(\Sigma_\ell)}<\va.
\]
From the continuity of $J$, there exists $\delta_3(\va)$ such that if $\|v-\bar v\|_{C^{2,\alpha}(\Sigma_\ell)}<\delta_3(\va)$ then 
\[
|J(v)-J(\bar v)|<\va.
\]
Let 
\[
\delta=\min\left\{\delta_1\left(\frac{1}{2}\delta_2(\va)\right),\delta_2\left(\delta_3\left(\left(\frac{c_0\theta\delta_2(\va)}{2}\right)^{\frac{1}{\theta}}\right)\right)\right\}.
\]
Choose $t_0$ sufficiently large such that $\|v(\cdot,t_0)-\bar v\|_{L^2(\Sigma_\ell)}<\delta$. This implies that
\[
\|v(\cdot,t)-\bar v\|_{L^2(\Sigma_\ell)}<\frac{1}{2}\delta_2(\va)\quad\mbox{ for }t\in [t_0,t_0+\tau_0],
\]
and consequently
\[
\|v(\cdot,t)-\bar v\|_{C^{2,\alpha}(\Sigma_\ell)}<\va\quad\mbox{ for }t\in [t_0+\tau_0,t_0+2\tau_0].
\]
Define
\[
T=\sup\{t: \|v(\cdot,s)-\bar v\|_{C^{2,\alpha}(\Sigma_\ell)}<\va\quad\mbox{ for }s\in [t_0+\tau_0,t]\}.
\]
Then we know that $T\ge t_0+2\tau_0$. 

We claim that $T=\infty$. If not, then by \eqref{eq:Lojasiewicz} and \eqref{eq:auxLojasiewicz}, it follows that for all $t\in [t_0+\tau_0,T]$, we have
%For every $\va\in (0,r_0)$, there exists $t_j$ such that $\|v(\cdot,t_j)-\bar v\|_{C^{2,\al}(\Sigma_\ell)}<\frac{\va}{2},\ \|v(\cdot,t_j)-\bar v\|_{L^{2}(\Sigma_\ell)}<\frac{\va}{2}$ and $|J(v(\cdot,t_j))-J(\bar v)|<(c_0\theta\va/2)^\frac{1}{\theta}$. Then for all $t>t_j$ such that $\|v(\cdot,s)-\bar v\|_{C^{2,\al}(\Sigma_\ell)}<r_0$ for $s\in (t_j,t)$, we have
\begin{align*}
-\frac{\ud}{\ud t} \left(J(v(\cdot,t))-J(\bar v)\right)^\theta= -\theta \left(J(v(\cdot,t))-J(\bar v)\right)^{\theta-1}\frac{\ud}{\ud t} J(v(\cdot,t))\ge c_0\theta\|v_t(\cdot,t)\|_{L^2(\Sigma_\ell)}.
\end{align*}
By integrating the above inequality and using Minkowski's integral inequality, we have 
\begin{align*}
\int_{t_0+\tau_0}^t\|v_s(\cdot,s)\|_{L^2(\Sigma_\ell)}\,\ud s \ge  \left\| \int_{t_0+\tau_0}^t v_s(\cdot,s)\,\ud s \right\|_{L^2(\Sigma_\ell)}= \| v(\cdot,t)-v(\cdot,t_0+\tau_0)\,\ud s \|_{L^2(\Sigma_\ell)}.
\end{align*}
By triangle's inequality, we have that
\begin{align*}
\|v(\cdot,t)-\bar v\|_{L^2(\Sigma_\ell)}&\le\|v(\cdot,t_0+\tau_0)-\bar v\|_{L^2(\Sigma_\ell)}+ \| v(\cdot,t)-v(\cdot,t_0+\tau_0)\,\ud s \|_{L^2(\Sigma_\ell)}\\
&\le\|v(\cdot,t_0+\tau_0)-\bar v\|_{L^2(\Sigma_\ell)}+\frac{1}{c_0\theta}\left(J(v(\cdot,t_0+\tau_0))-J(\bar v)\right)^\theta
\end{align*}
for all $t\in [t_0+\tau_0,T]$. Again, since $\|v(\cdot,t_0)-\bar v\|_{L^2(\Sigma_\ell)}<\delta$, we have 
\[
\|v(\cdot,t_0+\tau_0)-\bar v\|_{C^{2,\alpha}(\Sigma_\ell)}<\delta_3\left(\left(\frac{c_0\theta\delta_2(\va)}{2}\right)^{\frac{1}{\theta}}\right),
\]
and consequently,
\[
|J(v(\cdot,t_0+\tau_0))-J(\bar v)|<\left(\frac{c_0\theta\delta_2(\va)}{2}\right)^{\frac{1}{\theta}}.
\]
Hence, for all $t\in [t_0+\tau_0,T]$, we have
\[
\|v(\cdot,t)-\bar v\|_{L^2(\Sigma_\ell)}\le\|v(\cdot,t_0+\tau_0)-\bar v\|_{L^2(\Sigma_\ell)}+\frac{1}{2}\delta_2(\va)\le \delta_2(\va).
\]
This implies that 
\[
\|v(\cdot,t)-\bar v\|_{C^{2,\alpha}(\Sigma_\ell)}<\va\quad\mbox{ for }t\in [t_0+\tau_0,T+\tau_0].
\]
This contradicts the maximality of $T$. Therefore, $T=\infty$, and thus,
\[
\|v(\cdot,t)-\bar v\|_{C^{2,\alpha}(\Sigma_\ell)}<\va\quad\mbox{ for }t\ge t_0+\tau_0.
\]
This proves that $v(\cdot,t)\to\bar v$ in $C^{2,\alpha}(\Sigma_\ell)$ as $t\to\infty$.

Moreover, for all $t$ large, we have
\begin{align*}
\frac{\ud}{\ud t} \left(J(v(\cdot,t))-J(\bar v)\right)&= -\int_{\Sigma_\ell} (v^p)_t v_t\,\ud vol_g\\
&\le -C \int_{\Sigma_\ell} (v^p)_t (v^p)_t\,\ud vol_g\\
&\le-C \|\nabla J (v(\cdot,t)) \|^2_{L^2(\Sigma_\ell)}\\
&\leq -C \abs{J(v(\cdot,t))-J(\bar v)}^{2-2\theta},
\end{align*}
where we used Proposition \ref{prop:harnack-sub2} in the first inequality, the equation \eqref{eq:fde1critical} in the second inequality, and \eqref{eq:Lojasiewicz} in the last inequality. Since $0<\theta<1/2$, we have
\[
\frac{\ud}{\ud t} \left(J(v(\cdot,t))-J(\bar v)\right)^{2\theta-1}\ge (1-2\theta)C>0,
\]
and thus, 
\[
J(v(\cdot,t))-J(\bar v) \le C t^{\frac{1}{2\theta-1}}
\]
for all $t$ large. Then for $T$ large, we have
\begin{align*}
\left(\int_T^{2T}\Big(\int_{\Sigma_\ell} |v_t|^2\,\ud vol_g\Big)^{1/2}\,\ud t  \right)^{2}
& \le T\int_T^{2T}\int_{\Sigma_\ell} |v_t|^2\,\ud vol_g\,\ud t  \\
&\le C T \int_T^{2T}\int_{\Sigma_\ell} v^{p-1}|v_t|^2\,\ud vol_g\,\ud t\\
&=CT (J(v(\cdot,T))-J(v(\cdot,2T)))\\
&\le CT (J(v(\cdot,T))-J(\bar v))\\
&\le CT^\frac{2\theta}{2\theta-1}.
\end{align*}
Hence
\[
\int_T^{2T}\Big(\int_{\Sigma_\ell} |v_t|^2\,\ud vol_g\Big)^{1/2}\,\ud t \le C T^\frac{\theta}{2\theta-1},
\]
and thus,
\begin{align*}
\int_T^{\infty}\Big(\int_{\Sigma_\ell} |v_t|^2\,\ud vol_g\Big)^{1/2}\,\ud t&=\sum_{k=0}^\infty\int_{2^kT}^{2^{k+1}T}\Big(\int_{\Sigma_\ell} |v_t|^2\,\ud vol_g\Big)^{1/2}\,\ud t \\
&\le C T^\frac{\theta}{2\theta-1}\sum_{k=0}^\infty 2^\frac{\theta k}{2\theta-1}\\
&\le C T^\frac{\theta}{2\theta-1}.
\end{align*}
Now for $\tilde t>t\gg 1$, we have
\begin{align*}
\|v(\cdot,\tilde t)-v(\cdot,t)\|_{L^2(\Sigma_\ell)}= \left\|\int_{t}^{\tilde t} v_s(\cdot,s) \,\ud s\right\|_{L^2(\Sigma_\ell)}&\le \int_{t}^{\tilde t} \|v_s(\cdot,s)\|_{L^2(\Sigma_\ell)} \,\ud s\\
&\le \int_{t}^{\infty} \|v_s(\cdot,s)\|_{L^2(\Sigma_\ell)} \,\ud s\\
&\le Ct^\frac{\theta}{2\theta-1}.
\end{align*}
Therefore, by sending $\tilde t\to\infty$,  we obtain 
\[
\|v(\cdot,t)-\bar v\|_{L^2(\Sigma_\ell)}\le Ct^{-\gamma},
\]
where $\gamma=\frac{\theta}{1-2\theta}>0$. Since $v$ is bounded in all $C^k$ norms, by interpolation inequalities, we have
\[
\|v(\cdot,t)-\bar v\|_{C^2(\Sigma_\ell)}\le Ct^{-\gamma}.
\]
This finishes the proof of the decay rate.
\end{proof}
\begin{proof}[Proof of Theorem \ref{thm:criticalglobal}:]
By the change of variables \eqref{eq:cylinercoordinate}, the convergence of 
$u$ is reduced to the convergence of $w$. Under the assumptions of Theorem \ref{thm:criticalglobal}, $w$ satisfies \eqref{eq:fde1critical0}, and thus, Theorem \ref{thm:criticalglobal2} applies. Scaling back to $u$, Theorem \ref{thm:criticalglobal} follows.
\end{proof}

\begin{proof}[Proof of Theorem \ref{thm:subcriticalglobal20}:]
By the change of variables \eqref{eq:cylinercoordinate}, the convergence of 
$u$ is reduced to the convergence of $w$. Under the assumptions of Theorem \ref{thm:subcriticalglobal20}, $w$ satisfies \eqref{eq:fde1subcritical0}, and thus, Theorem \ref{thm:subcriticalglobal22} (which is a result of del Pino-S\'aez \cite{del}) applies. Scaling back to $u$, Theorem \ref{thm:subcriticalglobal20} follows.
\end{proof}

\section{Subcritical cases}\label{sec:subcriticalnoboundary}

In this section, we will prove Theorems \ref{thm:subcriticalglobal2} and \ref{thm:subcriticalglobal3}. We will prove Theorem \ref{thm:subcriticalglobal3} first, and Theorem \ref{thm:subcriticalglobal2} would follow.

\subsection{Uniform bounds}

We assume that $b$ is a smooth function, $1<p<\frac{n+2}{n-2}$ if $n>3$, and $1<p<\infty$ if $n=1,2$.

\begin{prop} \label{prop:harnack-sub-upperbound} 
Suppose $v$ is a positive smooth solution of \eqref{eq:fdemanifold1} satisfying \eqref{eq:vLpbound}.   Then there exists a constant $C$ depending only on $M,g,n,T^*,b,p$, and the $C_0$ in \eqref{eq:vLpbound} such that 
\[
v(x,t)\le C \quad \mbox{for all }x\in M,\ t>1. 
\]
\end{prop}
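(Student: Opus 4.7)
The plan is to bootstrap the uniform $L^{p+1}$ bound from \eqref{eq:vLpbound} up to a uniform $L^\infty$ bound via a Moser iteration, in a way that parallels the energy estimates already carried out in Section \ref{sec:integralbound} but iterated at arbitrarily large $L^q$ levels. The strict subcriticality $p<\frac{n+2}{n-2}$ will be the crucial ingredient.

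For each $k>1$, multiplying \eqref{eq:fdemanifold1} by $v^{k-1}$ and integrating by parts on $M$ (using $v^{k-1}(v^p)_t=\frac{p}{k+p-1}(v^{k+p-1})_t$ and $\int v^{k-1}\Delta_g v\,\ud vol_g=-\frac{4(k-1)}{k^2}\int|\nabla_g v^{k/2}|^2\,\ud vol_g$) would yield
\[
\frac{p}{k+p-1}\frac{d}{dt}\int_M v^{k+p-1}\,\ud vol_g+\frac{4(k-1)}{k^2}\int_M|\nabla_g v^{k/2}|^2\,\ud vol_g=\int_M b v^k\,\ud vol_g+\frac{p}{(p-1)T^*}\int_M v^{k+p-1}\,\ud vol_g.
\]
Because $p-1<\frac{4}{n-2}$, the exponent $k+p-1$ lies strictly below the Sobolev exponent $\frac{kn}{n-2}$ associated with $v^{k/2}$ whenever $k\ge 2$. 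I would interpolate $L^{k+p-1}$ between $L^k$ and $L^{kn/(n-2)}$ and then apply the Sobolev embedding of $W^{1,2}(M)$ into $L^{2n/(n-2)}(M)$ together with Young's inequality to absorb $\frac{p}{(p-1)T^*}\int v^{k+p-1}$ into half of the gradient term on the left, obtaining a closed differential inequality whose right-hand side depends only on $\int v^k$ (and $\|b\|_\infty$, $T^*$, $k$, $p$).

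Next, combining this inequality with a time cut-off $\eta(t)$ on shrinking intervals $I_j=[s+1-2^{-j},\,s+1]$ and invoking the parabolic Sobolev inequality, I would derive a Moser-type gain-of-integrability step: a uniform-in-time $L^{q_j}(M)$ bound on $v$ over $I_j$ implies a uniform-in-time $L^{q_{j+1}}(M)$ bound over $I_{j+1}\subset I_j$ with $q_{j+1}=\gamma q_j$ for a fixed ratio $\gamma=\frac{n+2}{n}>1$. Iterating this step from the base level $q_0=p+1$ provided by \eqref{eq:vLpbound} and taking $j\to\infty$ then yields $\|v(\cdot,t)\|_{L^\infty(M)}\le C$ for all $t\ge 1$, which is the assertion of the proposition.

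The main obstacle is the standard Moser bookkeeping: one must track how the constants $C(k_j)$ appearing in the gain-of-integrability step depend on $k_j$ and show that $\prod_j C(k_j)^{1/q_j}$ converges as $q_j\to\infty$. The relevant constants depend polynomially on $k_j$ through factors like $\frac{k_j^2}{k_j-1}$ and through the subcritical margin $\frac{2k_j}{n-2}-(p-1)$, which grows linearly in $k_j$, so the required convergence holds as long as $q_j$ grows geometrically. Strict subcriticality is essential here: at $p=\frac{n+2}{n-2}$ the absorption in the previous paragraph would demand a smallness hypothesis on $\|v\|_{L^{p+1}(M)}$, which is precisely why the critical case in Section \ref{sec:criticalnoboundary} must be handled by the completely different moving-spheres/Kelvin-transform approach.
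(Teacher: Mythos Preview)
Your proposal is correct and takes essentially the same approach as the paper: both run a Moser iteration on shrinking time intervals starting from the $L^{p+1}$ bound \eqref{eq:vLpbound}, with the subcritical gap $p<\frac{n+2}{n-2}$ guaranteeing geometric growth of the exponents and convergence of the iterated constants. The paper's implementation (adapted from Bonforte--V\'azquez) differs only in packaging---it uses truncations $v_m=\min(v,m)$ and, rather than absorbing $\int v^{k+p-1}$ via interpolation and Young, keeps this term on the right and iterates on $q=\beta+p+1$ directly, yielding the recursion $q_{k+1}=(1+\tfrac{2}{n})q_k+1-p$ instead of a purely multiplicative one; neither variation affects the outcome.
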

\begin{proof}
We only do the $n\ge 3$ case, since the other one is similar. We will use Moser's iteration and adapt that in Bonforte-V\'azquez \cite{BV} for the fast diffusion equations. 

Let $0<T_2<T_1<T_0$ be such that $|T_1-T_2|\le 1$, $\eta(t)$ be a smooth cut-off function so that $\eta(t)=0$ for all $t<T_2$, $0\le \eta(t)\le 1$ for $t\in [T_2,T_1]$, $\eta(t)=1$ for all $t>T_1$, and $|\eta'(t)|\le \frac{2}{T_1-T_2}$. Denote
\[
Q_1=M\times [T_1,T_0],\quad  Q_2=M\times [T_2,T_0].
\]
For $m>0$, define
\[
v_m=\min(v,m).
\]
Let $\beta\ge 0$. In the following, $C$ will be denoted as various constants that may change from lines to lines, but it will be \emph{independent} of $\beta, m,T_0,T_1,T_2$. 

We multiple $\eta^2v_m^\beta v$ on both sides of the equation \eqref{eq:fdemanifold1} and integrate over $Q_2$. Then by integration by parts, we have that
\[
\iint_{Q_2}\eta^2v_m^\beta v \frac{\partial}{\partial t} v^p+\iint_{Q_2}\eta^2\nabla_g v\nabla_g (vv_m^\beta)    \le  \frac{p}{T^*(p-1)} \iint_{Q_2} \eta^2v_m^\beta v^{p+1}+\iint_{Q_2} b \eta^2v_m^\beta v^2   .
\]
For the first term on the left-hand side, we have
\begin{align*}
\iint_{Q_2}\eta^2v_m^\beta v \frac{\partial}{\partial t} v^p
&=\frac{p}{p+1} \iint_{Q_2}\eta^2v_m^\beta  \frac{\partial}{\partial t} v^{p+1}\\
&=\frac{p}{p+1} \iint_{Q_2}\eta^2 \frac{\partial}{\partial t} (v_m^\beta v^{p+1})-  \eta^2 v^{p+1}\frac{\partial}{\partial t} v_m^\beta\\
&=\frac{p}{p+1} \iint_{Q_2}\eta^2 \frac{\partial}{\partial t} (v_m^\beta v^{p+1})-  \eta^2 v_m^{p+1}\frac{\partial}{\partial t} v_m^\beta\\
&=\frac{p}{p+1} \iint_{Q_2}\eta^2 \frac{\partial}{\partial t} (v_m^\beta v^{p+1})-  \frac{\beta}{p+1+\beta}\eta^2 \frac{\partial}{\partial t} v_m^{p+1+\beta}\\
&=\frac{p}{p+1} \int_{\Sigma_\ell}( v_m^\beta v^{p+1})(T)-  \frac{\beta}{p+1+\beta}(v_m^{p+1+\beta})(T)\\
&\quad+\frac{p}{p+1} \iint_{Q_2}2\eta\eta_t (v_m^\beta v^{p+1})-  \frac{\beta}{p+1+\beta}2\eta\eta_t  v_m^{p+1+\beta}\\
&\ge \frac{p}{p+1+\beta} \int_{M}( v_m^\beta v^{p+1})(T)-\frac{C}{T_1-T_2} \iint_{Q_2} v_m^\beta v^{p+1}.
\end{align*}
For the second term on the left-hand side, we have
\begin{align*}
\iint_{Q_2}\eta^2\nabla_g v\nabla_g (vv_m^\beta)&=\iint_{Q_2}\eta^2(\nabla_g v\nabla_g v v_m^\beta+\beta \nabla_g v\nabla_g v_m v v_m^{\beta-1})\\
&\ge \frac{1}{2(1+\beta)}\iint_{Q_2} \eta^2|\nabla_g (vv_m^{\frac{\beta}{2}})|^2.
\end{align*}
%For the first term on the right-hand side, we have
%\begin{align*}
%\iint_{Q_2} \eta^2v_m^\beta v \frac{\partial}{\partial\rho}v&=\frac{1}{2}\iint_{Q_2} \eta^2v_m^\beta  \frac{\partial}{\partial\rho}v^2\\&=-\frac 12\iint_{Q_2} \eta^2v^2 \frac{\partial}{\partial\rho}v_m^\beta\\
%&=-\frac 12\iint_{Q_2} \eta^2v_m^2 \frac{\partial}{\partial\rho}v_m^\beta=-\frac {1}{2(\beta+2)}\iint_{Q_2} \eta^2 \frac{\partial}{\partial\rho}v_m^{\beta+2}=0.
%\end{align*}
Combine all the estimates together, we have
\begin{equation}\label{eq:caccipoli1}
\int_{M}( v_m^\beta v^{p+1})(T)+\iint_{Q_1} |\nabla_g (vv_m^{\frac{\beta}{2}})|^2 \le \frac{C(1+\beta)}{T_1-T_2} \iint_{Q_2} v_m^\beta (v^{p+1}+v^2).
\end{equation}
Choose $s_0\in [T_1,T]$ such that
\[
\int_{M}( v_m^\beta v^{p+1})(s_0)\ge \frac{1}{2}\sup_{t\in [T_1,T]}\int_{\Sigma_\ell}( v_m^\beta v^{p+1})(t).
\]
Then we can replace $T$ by $s_0$ in \eqref{eq:caccipoli1}, and obtain that
\begin{equation}\label{eq:caccipoli2}
\sup_{t\in [T_1,T]}\int_{M}( v_m^\beta v^{p+1})(t)+\iint_{Q_1} |\nabla_g (vv_m^{\frac{\beta}{2}})|^2 \le \frac{C(1+\beta)}{T_1-T_2} \iint_{Q_2} v_m^\beta (v^{p+1}+v^2).
\end{equation}
In particular, 
\be\label{eq:interm}
\sup_{t\in [T_1,T]}\int_{M}( v_m^\beta v^{p+1})(t)\le \frac{C(1+\beta)}{T_1-T_2} \iint_{Q_2} v_m^\beta (v^{p+1}+v^2).
\ee

Using \eqref{eq:vLpbound} and H\"older's inequality, we have
\[
\frac{1}{C} \le \int_{M}v^{p+1}\le \left(\int_{M}v^{p+1+\beta}\right)^{\frac{p+1}{p+1+\beta}}vol(M)^{\frac{\beta}{p+1+\beta}}\le C\left(\int_{M}v^{p+1+\beta}\right)^{\frac{p+1}{p+1+\beta}}.
\]
Hence, 
\[
\left(\int_{M}v^{p+1+\beta}\right)^{\frac{1}{p+1+\beta}}\ge \frac{1}{C},
\]
where $C$ is independent of $\beta$. Then, since $p>1$, we have
\be\label{eq:holderaux}
\int_{M}v^2v_m^{\beta}\le \int_{M}v^{\beta+2}\le \left(\int_{M}v^{p+1+\beta}\right)^{\frac{\beta+2}{p+1+\beta}}vol(M)^{\frac{p-1}{p+1+\beta}}\le C\int_{M}v^{p+1+\beta}.
\ee
By sending $m\to\infty$ in \eqref{eq:interm}, we have
\begin{equation}\label{eq:caccipoliaux}
\sup_{t\in [T_1,T]}\int_{M}(v^{p+1+\beta})(t)\le \frac{C(1+\beta)}{T_1-T_2} \iint_{Q_2}  v^{p+1+\beta}.
\end{equation}

On the other hand, using the Sobolev inequality, we have that for each $\sigma \in (1,\frac{n}{n-2})$, 
\begin{align*}
\int_{M} (vv_m^{\frac{\beta}{2}})^{2\sigma}&=\int_{M} (vv_m^{\frac{\beta}{2}})^{2} (v^2v_m^{\beta})^{\sigma-1}\nonumber\\
&\le \left(\int_{M} (vv_m^{\frac{\beta}{2}})^{\frac{2n}{n-2}}  \right)^{\frac{2}{n-2}}\left(\int_{M} (v^2v_m^{\beta})^{\frac{(\sigma-1)n}{2}}\right)^{\frac{2}{n}}\nonumber\\
&\le C \left(\int_{M} \Big(|\nabla_g (vv_m^{\frac{\beta}{2}})|^2+v^2v_m^{\beta} \Big)\right)\left(\int_{M} v^{\frac{(\beta+2)(\sigma-1)n}{2}}\right)^{\frac{2}{n}}\nonumber\\
&\le C \left(\int_{M} \Big(|\nabla_g (vv_m^{\frac{\beta}{2}})|^2+v^{p+1+\beta} \Big)\right)\left(\int_{M} v^{\frac{(\beta+2)(\sigma-1)n}{2}}\right)^{\frac{2}{n}},
\end{align*}
where we used \eqref{eq:holderaux} in the last inequality. Integrating from $T_1$ to $T$, we have
\begin{align}
\iint_{Q_1} (vv_m^{\frac{\beta}{2}})^{2\sigma}&\le C \left(\iint_{Q_1} \Big(|\nabla_g (vv_m^{\frac{\beta}{2}})|^2+v^{p+1+\beta} \Big)\right)\left(\sup_{t\in [T_1,T]}\int_{M} v^{\frac{(\beta+2)(\sigma-1)n}{2}}(t)\right)^{\frac{2}{n}}\nonumber\\
&\le  \frac{C(1+\beta)}{T_1-T_2}  \left(\iint_{Q_2} v^{p+1+\beta} \right)\left(\sup_{t\in [T_1,T]}\int_{M} v^{\frac{(\beta+2)(\sigma-1)n}{2}}(t)\right)^{\frac{2}{n}},\label{eq:caccipoli3}
\end{align}
where we used \eqref{eq:caccipoli2} and \eqref{eq:holderaux}. Choose 
\[
\frac{(\beta+2)(\sigma-1)n}{2}=p+1+\beta,\quad\mbox{that is}\quad \sigma=1+\frac{2(p+1+\beta)}{n(\beta+2)}.
\]
One can verify that (recall $\beta\ge 0$)
\[
1<\sigma<\frac{n}{n-2}\quad\mbox{because}\quad p<\frac{n+2}{n-2}.
\]
Hence, from \eqref{eq:caccipoli3} and \eqref{eq:caccipoliaux}, we obtain, by sending $m\to\infty$ in the end, 
\begin{equation}\label{eq:moser1}
\iint_{Q_1} v^{(\beta+2)\sigma}\le C\left(\frac{1+\beta}{T_1-T_2}\right)^{1+\frac{2}{n}}  \left(\iint_{Q_2} v^{p+1+\beta} \right)^{1+\frac{2}{n}}.
\end{equation}
For $k=0,1,2,3,\cdots$, we define $\beta_0=0$,
\begin{align*}
\sigma_k&= \sigma=1+\frac{2(p+1+\beta_k)}{n(\beta_k+2)},\\
\beta_{k+1}&= (\beta_k+2)\sigma_k-p-1=\left(1+\frac 2n\right)\beta_k+\frac{2(p+1)}{n}+1-p,\\
q_k&=\beta_{k}+p+1
\end{align*}
Then we have $q_0=p+1$,  
\begin{align*}
q_{k+1}&=\left(1+\frac 2n\right)q_k+1-p=\left(1+\frac 2n\right)^{k+1}\left(p+1-\frac{(p-1)n}{2}\right)+\frac{(p-1)n}{2}\\
\beta_{k+1}&=\left(1+\frac 2n\right)q_k+1-p=\left(\left(1+\frac 2n\right)^{k+1}-1\right)\left(p+1-\frac{(p-1)n}{2}\right).
\end{align*}
Since $p<\frac{n+2}{n-2}$, we have $p_{k}$ is strictly increasing, and $p_k\to+\infty$ as $k\to\infty$.

Choose $t_0=\frac {1}{4}$, $t_1=c_0$, and  $t_{k+1}-t_k=c_0k^{-4}$ where  $c_0=(2\sum_{k=1}^\infty k^{-4})^{-1}>\frac 38$. Then $\lim_{k\to\infty}t_k=\frac 12$. Denote $Q_k=\Sigma_\ell\times [t_k,1]$. Then using \eqref{eq:moser1}, we have
\[
\iint_{Q_{k+1}} v^{q_{k+1}}\le C\left((1+\beta_k)k^4\right)^{1+\frac{2}{n}}  \left(\iint_{Q_k} v^{q_k} \right)^{1+\frac{2}{n}}.
\]
Hence
\begin{align*}%\label{eq:moser2}
\left(\iint_{Q_{k+1}} v^{q_{k+1}}\right)^\frac{1}{q_{k+1}}&\le
 C^{\frac{1}{q_{k+1}}}\left(1+\frac 2n\right)^{\frac{2k(n+2)}{nq_{k+1}}} 
 \left(\iint_{Q_k} v^{q_k} \right)^{\frac{1}{q_{k}}\frac{2+n}{n}\frac{q_{k}}{q_{k+1}}}\\
 &\le C^{\frac{1}{q_{k+1}}\sum_{j=0}^k\left(\frac{n+2}{n}\right)^j} B^{\frac{1}{q_{k+1}}\sum_{j=0}^k (k-j)\left(\frac{n+2}{n}\right)^j}  \left(\iint_{Q_0} v^{q_0} \right)^{\frac{1}{q_{k+1}}\left(\frac{2+n}{n}\right)^{k+1}},
 \end{align*}
 where $B=\left(1+\frac 2n\right)^{\frac{2(n+2)}{n}}$. 
Since
 \begin{align*}
 \lim_{k\to\infty}\frac{1}{q_{k+1}}\sum_{j=0}^k\left(\frac{n+2}{n}\right)^j&=\frac{n}{2(p+1)-(p-1)n},\\
  \lim_{k\to\infty}\frac{1}{q_{k+1}}\sum_{j=0}^k(k-j)\left(\frac{n+2}{n}\right)^j&=\frac{n^2}{4(p+1)-2(p-1)n},\\
   \lim_{k\to\infty}\frac{1}{q_{k+1}}\left(\frac{n+2}{n}\right)^{k+1}&=\frac{2}{2(p+1)-(p-1)n},
 \end{align*}
by sending $k\to\infty$, we obtain
 \[
 \|v\|_{L^\infty(M\times [1/2,1])}\le C \left(\iint_{Q_0} v^{p+1} \right)^\frac{2}{2(p+1)-(p-1)n}\le C.
 \]
This finishes the proof, since the equation is translation invariant in the time variable.
\end{proof}

\begin{prop} \label{prop:low-bd} 
Let $v$ be as in Proposition \ref{prop:harnack-sub-upperbound}. Then there exists $C>0$ such that
\[
v(x,t)\ge \frac{1}{C} \quad \mbox{for }x\in M,\ t\ge 1.  
\]

\end{prop}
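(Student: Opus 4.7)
I would mirror the Moser iteration of Proposition \ref{prop:harnack-sub-upperbound}, applying it to \emph{negative} powers of $v$ in order to produce an $L^\infty$ bound on $v^{-1}$ (equivalently, a uniform positive lower bound on $v$). Multiplying \eqref{eq:fdemanifold1} by $\eta^2(t)v^{-(s+p)}$ for $s>0$, with the same temporal cut-off $\eta$ as in that proof, integrating over $M$, and integrating by parts in space, one converts the gradient term into $\int_M|\nabla_g f|^2\,\ud vol_g$ where $f:=v^{-(s+p-1)/2}$. Absorbing the $bv$ term via $\|b\|_{L^\infty(M)}\int f^2$ and keeping the dissipative contribution $-\tfrac{sC}{p}\int v^{-s}$ on the left yields a Caccioppoli-type bound
\[
\sup_{t\in[T_1,T_0]}\int_M v^{-s}(\cdot,t)\,\ud vol_g + \int_{T_1}^{T_0}\!\!\!\int_M|\nabla_g f|^2\,\ud vol_g\,\ud t \le \frac{C(1+s)}{T_1-T_2}\int_{T_2}^{T_0}\!\!\!\int_M v^{-(s+p-1)}\,\ud vol_g\,\ud t.
\]
Combined with the Sobolev inequality on $M$, this produces exactly the recursive structure of \eqref{eq:moser1}. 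Iterating along $s_k\to\infty$ on shrinking time cylinders in the same manner as before yields
\[
\|v^{-1}\|_{L^\infty(M\times[1/2,1])}\le C\Bigl(\int_0^1\!\!\!\int_M v^{-q_0}\,\ud vol_g\,\ud t\Bigr)^\alpha
\]
for a suitable seed exponent $q_0>0$ and constant $\alpha>0$.

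\textbf{Main obstacle: the seed bound.} The heart of the argument is to produce a uniform-in-time $L^{q_0}$ bound on $v^{-1}$. I would take $s>0$ small in the Caccioppoli estimate and use Sobolev interpolation between $L^{2^*}$ (controlled by $\int|\nabla_g f|^2+\int f^2$) and the weaker $L^q$ with $q=2s/(s+p-1)<2$, for which $\|f\|_{L^q}^q=\int v^{-s}$. Absorbing $\int f^2$ into the gradient on the left reduces matters to an ODE-type inequality
\[
X'(t)+aX(t)\le KX(t)^\gamma,\qquad X(t)=\int_M v^{-s}(\cdot,t),\quad \gamma=\tfrac{s+p-1}{s}>1,
\]
whose solutions can blow up once $X$ exceeds the unstable equilibrium $(a/K)^{1/(\gamma-1)}$; this is the delicate point, since the equation is not uniformly parabolic when $v$ is small and standard parabolic Harnack tools are unavailable. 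The saving grace is the uniform $L^{p+1}$ lower bound \eqref{eq:vLpbound}: together with the upper bound of Proposition \ref{prop:harnack-sub-upperbound} it forces $v$ to be bounded away from $0$ on a set of uniform positive measure, and testing \eqref{eq:fdemanifold1} against the positive first eigenfunction $\varphi_1$ of $-\Delta_g-b$ delivers a uniform positive-mass bound $\int_M\varphi_1 v^p\ge c_0>0$. I expect that coupling this mass information with the ODE structure rules out blow-up of $X(t)$ and secures the seed.

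\textbf{Conclusion.} Once the seed bound is in hand, the iteration yields $\|v^{-1}\|_{L^\infty(M\times[1/2,1])}\le C$; applying the same argument to time-shifted solutions (permitted by the time-translation invariance of \eqref{eq:fdemanifold1}) propagates the estimate to all of $M\times[1,\infty)$, giving $v\ge 1/C$ as asserted.
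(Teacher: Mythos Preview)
Your Moser-iteration scheme for negative powers is a natural idea, but the proposal has a genuine gap exactly where you flag it: the seed bound. The differential inequality $X'+aX\le KX^\gamma$ with $\gamma=(s+p-1)/s>1$ that you derive offers no protection against finite-time blow-up once $X(0)=\int_M v_0^{-s}$ exceeds the unstable equilibrium $(a/K)^{1/(\gamma-1)}$, and nothing in the hypotheses forces it below that threshold. The mass information you propose to inject, $\int_M\varphi_1 v^p\ge c_0$, is a lower bound on a \emph{positive} power of $v$; combined with the upper bound from Proposition~\ref{prop:harnack-sub-upperbound} it only says $v$ is bounded away from zero on a set of uniformly positive measure. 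That does not control $\int_M v^{-s}$ on the complementary set where $v$ may be small, so the coupling you ``expect'' to rule out blow-up is not substantiated. Without the seed, the iteration never starts.

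The paper sidesteps this difficulty entirely by a different mechanism. It introduces the Aronson--B\'enilan type quantity $Q:=v^{-p}Lv$ with $L=-\Delta_g-b$, and a direct computation shows
\[
\partial_t Q\ge \tfrac{1}{p}v^{1-p}\Delta_g Q+\tfrac{2}{p}v^{-p}\langle\nabla_g v,\nabla_g Q\rangle_g-\big(1-\tfrac{1}{p}\big)\alpha Q,
\]
so the maximum principle yields $Q(\cdot,t)\ge c_1:=\min\{\min_M Q(\cdot,0),0\}$ for all $t$. This is a \emph{pointwise} lower bound $Lv\ge c_1 v^p$, i.e.\ $-\Delta_g v-bv-(c_1v^{p-1})v\ge 0$, whose zeroth-order coefficient is bounded thanks to the upper bound already established. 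The standard \emph{elliptic} Harnack inequality on each time slice, together with the $L^{p+1}$ lower bound~\eqref{eq:vLpbound}, then gives $\inf_M v(\cdot,t)\ge C^{-1}\int_M v(\cdot,t)\ge C^{-1}$ directly. No integrability of negative powers is ever required.
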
 

\begin{proof}  
Let $L=-\Delta_g-b$, $Q= v^{-p}L v$, and $\alpha=\frac{p}{(p-1)T^*}$. Then we have 
\begin{align*}
\frac{\pa }{\pa t} Q&= -p v^{-p-1} \pa_t v Lv +v^{-p}L(\pa_t v)\nonumber\\
&=- v^{-2p} (-L v +\alpha v^{p}) Lv + \frac{1}{p}v^{-p} L(v^{-p+1}(- Lv+\alpha v^p))\nonumber\\
%&=Q^2-\al Q + \frac{1}{p}v^{-p} L(-vQ+\al v )\nonumber\\
&=-\frac{1}{p}v^{-p} L(vQ)-\left(1-\frac{1}{p}\right) \alpha Q +Q^2\\
&=-\frac{1}{p}v^{-p}  \Big((Lv)Q -v \Delta_{g}Q -2\langle\nabla_{g} v, \nabla_{g} Q\rangle_g\Big) -\left(1-\frac{1}{p}\right) \alpha Q+Q^2  \\&
= \frac{1}{p}v^{-p+1}\Delta_{g}Q +\frac{2}{p}v^{-p}\langle\nabla_{g} v, \nabla_{g} Q\rangle_g + \left(1-\frac{1}{p}\right)  Q^2 -\left(1-\frac{1}{p}\right) \alpha Q \\& 
\ge   \frac{1}{p}v^{-p+1}\Delta_{g}Q +\frac{2}{p}v^{-p}\langle\nabla_{g} v, \nabla_{g} Q\rangle_g -\left(1-\frac{1}{p}\right) \alpha Q. 
%=:\mathcal{L} Q. 
\end{align*}
%\[
%\frac{\pa }{\pa t}  c_0=0\le \mathcal{L} c_0= -\left(1-\frac{1}{p}\right) \alpha  c_0. 
%\]
By the comparison principle, we have $Q\ge c_1:= \min\{\min_{M} Q(\cdot,0), 0\}$. Thus, $$Lv \ge c_1 v^{p},$$ 
that is
\[
-\Delta_g v - bv-(c_1v^{p-1}) v\ge 0\quad\mbox{on }M.
\]
By Proposition \ref{prop:harnack-sub-upperbound}, the term $c_1v^{p-1}$ is bounded. By the standard local Harnack inequality of linear elliptic equation with bounded coefficients (cf. Theorem 8.18 in Gilbarg-Trudinger \cite{GT2001}), Proposition \ref{prop:harnack-sub-upperbound} and \eqref{eq:vLpbound}, we have 
\[
\inf_{M} v(\cdot,t) \ge \frac{1}{C}\int_{M} v (\cdot,t)\,\ud vol_g \ge  \frac{1}{C} \int_{M} v^{p+1}(\cdot,t)\,\ud vol_g\ge \frac{1}{C},
\]
for some $C>0$ depending only on $M,g,n,T^*,b,p$, and the $C_0$ in \eqref{eq:vLpbound}.
\end{proof}

\subsection{Convergence}
\begin{proof}[Proof of Theorem \ref{thm:subcriticalglobal3}:]
It follows from Proposition \ref{prop:harnack-sub-upperbound} and Proposition \ref{prop:low-bd} that the solution $v$ is uniformly bounded from below away from zero and uniformly bounded from above on $M\times[1,+\infty)$. By the H\"older estimates and Schauder estimates of linear parabolic equations, and also the bootstrap arguments, $v$ is uniformly bounded in $C^{k}(M)$ for every $k>0$ on $[1,\infty)$. Then, the proofs of the convergence and the decay rate will be the same as those of  Theorem \ref{thm:criticalglobal2}.
\end{proof}

\begin{proof}[Proof of Theorem \ref{thm:subcriticalglobal2}]
It follows from the result of Bidaut-V\'eron-V\'eron \cite{BVeron} that the solutions to the stationary equation of \eqref{eq:fde1subcritical} under the assumptions of Theorem \ref{thm:subcriticalglobal2} have to be constants. Then Theorem \ref{thm:subcriticalglobal2} follows from Theorem \ref{thm:subcriticalglobal3} immediately.
\end{proof}
\begin{proof}[Proof of Theorem \ref{thm:subcriticalglobal}:]
It follows from Theorem \ref{thm:subcriticalglobal2}, in a similar way to the proof of Theorem \ref{thm:criticalglobal} or Theorem \ref{thm:subcriticalglobal20}.
\end{proof}

\small

\bigskip

\noindent T. Jin

\noindent Department of Mathematics, The Hong Kong University of Science and Technology\\
Clear Water Bay, Kowloon, Hong Kong\\[1mm]
Email: \textsf{tianlingjin@ust.hk}

\medskip

\noindent J. Xiong

\noindent School of Mathematical Sciences, Laboratory of Mathematics and Complex Systems, MOE\\
Beijing Normal University, Beijing 100875, China\\[1mm]
 \textsf{Email: jx@bnu.edu.cn}

\end{document}